\documentclass[10pt]{article}
\usepackage{extarrows}
\usepackage{latexsym,amsmath,amsthm,bbm}  
\usepackage{pgfplots}
\usepackage{pgfplotstable}
\pgfplotsset{compat=1.8}
\usepackage{mleftright}
\usepackage{booktabs}
\usepackage{mathtools}
\usepackage{graphicx}

\usepackage{amssymb}
\usepackage{caption}
\usepackage{subcaption}
\usepackage{tikz}
%\usetikzlibrary{shapes,shapes.geometric, intersections}
\makeatletter
\renewcommand{\vec}[1]{\boldsymbol{#1}}
\providecommand{\boldsymbol}[1]{\mbox{\boldmath $#1$}}
\makeatother

\oddsidemargin=-0.5cm
\textwidth=17cm
\topmargin 0cm

\newtheorem{theorem}{Theorem}[section]  
\newtheorem{lemma}[theorem]{Lemma}

\newtheorem{corollary}[theorem]{Corollary}  
\newtheorem{prop}[theorem]{Proposition}
\newtheorem{proposition}[theorem]{Proposition}
  
\newtheorem{remark}[theorem]{Remark}

\newcommand{\opL}{\mathcal{L}}

\newcommand{\R}{\mathbb{R}}

\newcommand{\N}{\mathbb{N}}

\newcommand{\Man}{\mathbb{M}}
\newcommand{\Sph}{\mathbb{S}}
\newcommand{\sph}{\mathbb{S}}
\newcommand{\NN}{\mathbb{N}}

\newcommand{\M}{\mathbb{M}}
\newcommand{\K}{\mathbf{K}}

\newcommand{\rank}{\operatorname{rank}}
\newcommand{\dist}{\operatorname{dist}}

 \newcommand{\dif}{\mathrm{d}}
\newcommand{\MM}{\mathbf{M}} 
\newcommand{\RR}{\mathbf{R}} 
\newcommand{\LL}{\mathsf{L}} \newcommand{\PP}{\mathsf{P}}
 \newcommand{\scal}{\mathcal H}
\newcommand{\bb}{\mathbf{b}} \newcommand{\Exp}{\mathrm{Exp}}
 % hypergeomertics
\def\ultra#1#2{P^{(#1)}_{#2}} % ultraspericals
 \renewcommand{\b}{\mathbf{b}}
\newcommand{\bbar}{\overline{\mathbf{b}}}
\newcommand{\Manoa}{M\=anoa}
\newcommand{\Hawaii}{Hawai\kern.05em`\kern.05em\relax i}
\numberwithin{equation}{section}

\title{Extending Data to
  Improve Stability and Error Estimates Using Asymmetric Kansa-like
  Methods to Solve PDEs}

\date{July 15, 2025}
\author{ T.~Hangelbroek \thanks{Department of Mathematics, University
    of \Hawaii, \Manoa, Honolulu, HI 96822, USA.  Research supported
    by by grants DMS-1716927 and DMS-2010051 from the National Science
    Foundation.}, F. J.~Narcowich\thanks{ Department of Mathematics,
    Texas A\&M University, College Station, TX 77843, USA. Research
    supported by grant DMS-1514789 from the National Science
    Foundation.}, J. D.~Ward\thanks{ Department of Mathematics, Texas
    A\&M University, College Station, TX 77843, USA.%  Research
    % supported by grant DMS-1514789 from the National Science
    % Foundation.
  }}
\begin{document}

\maketitle

\begin{abstract} In this paper, a theoretical framework is presented
  for the use of a Kansa-like method to numerically solve elliptic
  partial differential equations on spheres and other manifolds. The
  theory addresses both the stability of the method and provides error
  estimates for two different approximation methods. A Kansa-like
  matrix is obtained by replacing the test point set $X$, used in the
  traditional Kansa method, by a larger set $Y$, which is a norming
  set for the underlying trial space. This gives rise to a rectangular
  matrix. In addition, if a basis of Lagrange (or local Lagrange)
  functions is used for the trial space, then it is shown
  that the stability of the matrix is comparable to the stability of
  the elliptic operator acting on the trial space.  Finally, two
  different types of error estimates are given. Discrete least squares
  estimates of very high accuracy are obtained for solutions that are
  sufficiently smooth. The second method, giving similar error
  estimates, uses a rank revealing factorization to create a
  ``thinning algorithm'' that reduces $\#Y$ to $\#X$. In practice,
  this algorithm doesn't need $Y$ to be a norming set.
\end{abstract}

\section{Introduction}
Asymmetric collocation, known as Kansa's method, is an often used
kernel-based mesh-free method for solving PDEs, even one subject to
boundary conditions. A review and discussion of the method is given in
\cite{Fasshauer_book}. 

The version of the problem considered here is for an elliptic
differential equation
\begin{equation}\label{elliptic}
\opL u= f
\end{equation}
on the sphere $\Sph^d$, where $f$ is smooth, and the operator $\opL$
is described in Section~\ref{opL}. (More generally, one could deal
with a similar problem on a smooth, compact Riemannian manifold $\M$.)
For a positive definite or a strictly conditionally positive definite
kernel $\Phi: \Sph^d\times \Sph^d \to \R$ and point sets
$X=\{x_1,\ldots,x_N\},Y=\{y_1,\ldots,y_M\}\subset \Sph^d$, Kansa's
method finds a $v$ in a kernel network
\begin{equation}
\label{Kansa}
v\in S_X(\Phi) = 
\mathrm{span}\{\Phi(\cdot,x_j)\mid x_j\in X\}
\qquad
\text{satisfying }
\quad
\opL v|_Y = f|_Y. 
 \end{equation}
 (For the case of a strictly conditionally positive definite $\Phi$
 the network is somewhat different, and is defined in
 \eqref{SCPD_net}.)  In practice this requires finding a solution of a
 linear system like $ \K a = f|_Y, $ where
 $\K:= \bigl( \opL^{(1)} \Phi(y_j,x_k)\bigr)$ is known as the Kansa
 matrix (this can be adjusted, for instance, by choosing a different
 basis for $S_X(\Phi)$ -- a modification we will consider below). The
 vector of coefficients $a$ is then used to generate the function
 $v=\sum a_j \Phi(\cdot,x_k)\in S_X(\Phi)$.

 In general, even if $\K$ is square, for instance by choosing $X=Y$,
 it is not necessarily symmetric positive definite, in contrast to the
 standard collocation matrix obtained from $\Phi$.  This provides a
 numerical method that is fairly easy to implement, but suffers from
 being potentially highly unstable.  Using carefully manufactured
 point sets, the matrix $\K$ may even be singular as shown in
 \cite{schaback_hon_2001}.

%\smallskip

Closely related, especially for elliptic problems, 
are \emph{kernel differentiation methods}, where one seeks to solve
 \begin{equation}
 \label{FD}
 \MM V = f|_Y.
 \end{equation} 
 $\MM$ is the associated \emph{kernel differentiation matrix} and it
 has the form $\MM =\bigl(\opL \chi_j(x_k)\bigr)$. The $\chi_j$'s form
 a Lagrange basis for $S_X(\Phi)$ -- i.e.,
 $\chi_j(x_k)=\delta_{j,k}$. The matrix itself is a kernel collocation
 matrix. The matrix appears in pseudo-spectral methods and was
 recently discussed in \cite{EHNRW}.  A local variant is used in
 kernel (FD) and (RBF-FD) methods \cite{Oleg,HRW_25,larsson_21}. For a
 full stencil, it is known that $\MM=\K \Phi^{-1}$, where $\Phi$ is
 the standard collocation matrix $\Phi = (\Phi(x_j,x_k))$ on $X$.  The
 relation between solutions $a\in \R^X$ of (\ref{Kansa}) and
 $V\in\R^X$ of (\ref{FD}), is that $V=v|_X$ is the restriction of the
 kernel network $v =\sum a_j \Phi(\cdot, x_j)$ to $X$.

%\smallskip

In both (\ref{Kansa}) and (\ref{FD}), the convergence of the computed
solution to the true solution is a consequence of stability and
consistency. Because kernel interpolation enjoys robust Sobolev error
estimates, consistency of these methods, measured as
$\| \opL u - \opL \mathcal{I}u\|$, is quite favorable.  Stability of
the method is measured as $\|\K^{-1}\|$ in (\ref{Kansa}) and
$\|\MM^{-1}\|$ in (\ref{FD}), in some matrix norm.  As mentioned
above, this is potentially problematic.  In short, the challenge to
proving convergence lies in the instability of the respective method,
which can be further identified as the inherent instability of the
Kansa matrix.

Under certain circumstances, the Kansa method can be shown to be
stable with $Y=X$: in \cite{EHNRW} it is shown that for an SBF kernel
and a Helmholtz operator (i.e., operators of the form
$\opL = c-\Delta$), $\K$ is invertible, with control on $\|\K^{-1}\|$.
A more general, but similar condition is used throughout \cite{Oleg}
-- in both cases, the requirement amounts to the fact that
$\opL^{(1)}K(\cdot,\cdot)$ is a kernel matrix.  Thus there are
instances where the Kansa problem is stable, but these require
compatibility between the kernel and operator.

This is in sharp contrast to other kernel-based mesh-free methods.
For example Galerkin methods \cite{NRW,Pat_diss} require only
coercivity of the bilinear form, a condition which is independent of
the kernel. Another kernel method
\cite{fasshauer_1999,narcowich_herm_1995} interpolates data of the
form $\lambda_if=d_i$, where the $\lambda_i$'s are linear functionals,
which could involve differential operators or point evaluations. The
idea is to apply these to a kernel and invert a collocation matrix
that is positive definite. However, this solves a problem where both
the values of the operator $\opL f$ and $f$ are known on $X$. This
data is not available for the Kansa method.

An underlying goal for this article, is to provide a method to
stabilize the Kansa method, thereby allowing the treatment of problems
like (\ref{elliptic}) where the operator $\opL$ and the kernel $\Phi$
are independent of one another.

\subsection{Overview} 
We consider a modification to the asymmetric collocation problem which
ensures stably invertible Kansa matrices by making a careful selection
$Y\neq X$. Specifically, we will need $Y$ to be a norming set that
satisfies conditions for $N$-dimensional spaces $W_N$ that satisfy a
Bernstein inequality. (See Section~\ref{norming_set_Y}.)  This fact is
important for our error estimates in both Sections \ref{soln_via_ls}
and \ref{thinning algorithm}.

The contributions of this article can be summarized as follows:

\begin{itemize}
\item We develop methods to construct an over-sampled (i.e.,
  rectangular) kernel collocation matrix, which generalizes the
  standard Kansa matrix. This new matrix is bounded below, and is
  independent or nearly independent of the problem size.
\item We provide approximation schemes to effectively treat the
  resulting over-determined systems, along with very good
  approximation rates. (See Theorems \ref{L2_est_L_2_proj} and \ref
  {thinning}.)
\end{itemize}

\paragraph{Constructing the over-sampled Kansa matrix $\K$.} 
The construction of the over-sampled Kansa matrix is made in two
stages: first, an $L_2$ norming set
$Y=\{y_k \mid k\le M\}\subset \Sph^d$ is chosen for the asymmetric
kernel space
$ S_X(\opL^{(1)}\Phi) = \mathrm{span}\{\opL \Phi(\cdot,x_j)\mid x_j\in
X\}$; the oversampled Kansa matrix
\begin{equation}\label{kansa_matrix}
  \K_X = \Bigl( \opL B_j(y_k)\Bigr)_{y_k\in Y}
\end{equation}
is assembled by employing a Riesz basis (see \eqref{Riesz})
$\{B_j\mid j\le N\}$ for the original kernel space $ S_X(\Phi)$. A
definite advantage in using the the basis $(B_j)$ is that for
strictly conditionally positive definite SBFs (see Section~\ref{SBFs})
there is no need to track side conditions or what happens in an
auxiliary space.

Construction of norming sets for $L_p$ in a more general setting,
namely for function spaces possessing a Nikolskii inequality, has
received significant recent interest in
\cite{DPSTT,limonova2022sampling,kashin2022sampling}.  The main
challenge for such problems is to produce a norming set $Y$ with
cardinality not much larger than that of $\dim S_X(\opL^{(1)}\Phi)$.
We show that this approach holds (namely, that $S_X(\opL^{(1)}\Pi)$
enjoys a Nikolskii inequality), while also presenting a different,
direct construction that uses a Bernstein inequality.

\paragraph{Solving $\K \vec{a} = f|_{Y}$.}
To treat the system $\K \vec{a} = f|_{Y}$, 
we study two approaches: 
\begin{itemize}
\item we show that discrete least squares provides, under further
  assumptions on the kernel, is a reasonably stable approximate solution.
  Such problems for $\RR^d$ are discussed by Cheung
  et~al.\ in \cite{Schaback-et-al}, and in several follow-up papers
  for similar problems on manifolds or surfaces
  \cite{Cheung_Ling_2018, Chen_Ling_2020, Chen_Ling_2025, Oleg};
\item we consider {\em thinning} the norming set to produce a subset
  $\tilde{Y}\subset {Y}$ having cardinality $\#\tilde{Y}=\#X$, while
  allowing $\|\K^{-1}\|$ to be suitably bounded.  For this, we employ
  rank revealing QR factorization as considered in
  \cite{gu1996efficient} -- because this step is fairly independent of
  the construction of the norming set, it may be possible to improve
  its performance by considering a faster implementation (this is
  remains an active field in numerical linear algebra
  \cite{Demmel,civril2009selecting,DuerschGu}) or by considering
  alternative thinning algorithms.
\end{itemize}
\smallskip

%%%%%%%%%%%%%%%%%%%%%%
%
%%
%
%%
%
%%%%%%%%%%%%%%%%%%%%%%
\section{Background}

Most of what we do here will be for $\sph^d$, which is the unit sphere
in $\R^{d+1}$. Even so, many of the results we obtain here will apply
to the more general setting of a manifold.  

\subsection{Manifolds \& Sobolev spaces}\label{sobolev_centers_nets}
Let $\M$ denote a $C^\infty$ compact Riemannian manifold without
boundary (i.e., closed), and having bounded geometry; see
\cite[Sect.~7.2.1]{triebel1992} The Sobolev space $W_p^{k}(\M)$ is
defined via the covariant derivative $\nabla^k$, which takes functions
to tensor fields of covariant order $k$. The norm is defined as
$\|f\|_{W_p^{k}(\M)}^p = \sum_{j=0}^k \int \|\nabla^j f(x)\|_x^p\dif
x$.  Here the norm for a tensor of covariant order $j$, denoted
$\|T\|_x$, is the norm induced by the Riemannian metric. See
\cite{HNW} for a complete description.

In the case where we are dealing with $p=2$, we may use the norm
equivalence between $W_2^{k}(\M)$ and the potential space $H^k(\M)$
\cite{strichartz_83}, which has the norm
\begin{equation}\label{pot_space_LL}
  \|f\|_{H^k(\M)}:=\|\LL^k f\|_{L_2(\M)},\ \text{where } \LL=
  \sqrt{\lambda_d - \Delta} \ \text{and }
  \lambda_d=\frac{d-1}{2}.
\end{equation}
Besides being easier to work with, the potential spaces provide a
simple way to deal with fractional Sobelev spaces; namely,
$H^s=\|\LL^s f\|_{L_2(\M)}$, $s\in \R$.  Potential spaces can also be
defined for $1\le p \le \infty$. They are denoted by $H_p^s$. However,
they are equivalent to the $W_p$'s only for $1<p<\infty$.

\paragraph{Centers in $\M$}
Define $\b(x,r)$ be the open ball of radius $r$ centered at $x\in \M$
and $\bbar(x,r)$ to be its closure. Let $X$ be a finite set of
distinct points in $\M$; we will call these the \emph{centers}. For
$X$, we define these quantities: \emph{mesh norm}, or \emph{fill
  distance}, $h_X=\sup_{y\in \sph^n} \inf_{\xi\in X} d(\xi,y)$, where
$d(\cdot,\cdot)$ is the geodesic distance between points on the
sphere; the \emph{separation radius},
$q_X=\frac12 \min_{\xi\neq\xi'}\, d(\xi,\xi')\,$; and the \emph{mesh
  ratio}, $\rho_X:=h_X/q_X\ge 1$. Of course, we may use other sets of
centers, $Y$, $Z$ and so on. If $\rho$ is bounded, and not large, then
we say that the point set $X$ is quasi-uniformly distributed, or
simply that $X$ is quasi uniform.

Geometrically, for every $x\in \M$ there will be some $\xi\in X$ such
that $x\in \bbar(\xi,h_X)$. Consequently,
$\M =\cup_{\xi\in X}\bbar(\xi,h_X)$; \emph{i.e.}, the union is a
covering for $\M$. However, for $r< h_Y$,
$\cup_{\xi\in X}\bbar(\xi,r)$ doesn't cover $\M$. The interpretation
of separation radius $q_X$ is that there is at least one pair of
closed balls $\bbar(\xi,q_X)$ and $\bbar(\eta,q_X)$ which intersect in
a single point. This fails for any pair with
$\frac12 \dist(\xi',\eta')<q_X$ .

\paragraph{Minimal $\epsilon$ nets in $\M$}
We will need another tool, \emph{minimal $\epsilon$
  nets}\footnote{These go by other names; $\epsilon$ nets, for
  example.} The description for them is given in
\cite[Sect.~3]{grove-petersen-1988-1}. Let $\epsilon>0$. There exists
an ordered set of points $\{p_1,\ldots,p_N\}\subset \M$ such that the
$\cup_{j=1}^N \b(p_j,\epsilon) = \M$ and such that the balls
$\b(p_j,\epsilon/2)$ are disjoint. Such a set is called a
\emph{minimal $\epsilon$-net} in $\M$\footnote{An $\epsilon$-net is a
  set of points $X =\{p_1, \dots, p_N\}$ for which $\bigcup \b(p_j, \epsilon)$
  covers $\M$ -- in other words, for which $h(X,\M) < \epsilon$. Also,
  these nets are quasi uniform, with separation distance
  $q\ge \epsilon/2$ and mesh ratio $h/q\le2$.}.
It has the following two important properties: First, there is a
number $N_1=N_1(\varepsilon, \M)$ for which $N\le N_1$. Second, there
exists an integer $N_2=N_2(\M)\ge 1$ such that for any $p\in\M$ the
open ball $\b(p,\epsilon)$ \emph{intersects at most} $N_2$ of the
balls $\b(p_j,\epsilon)$. It is remarkable that $N_2$ is
\emph{independent} of $\epsilon$ and, in fact, depends only on general
properties of $\M$ itself. It is important to note that such nets can
be constructed numerically \cite{Gonz_1985,Gonz_2010}.

\subsection{The operator $\opL$}\label{opL}
The operator $\opL$ in equation \eqref{elliptic} is assumed to have
$C^\infty$ coefficients in any local chart, and that in such a chart
$\opL$ is a uniformly strongly elliptic second order differential
operator. Also, $\opL$ satisfies the additional assumption
\begin{equation}\label{op_lower_bound_1}
\|\opL f\|_{L_2(\M)} \ge c_{\opL} \|f\|_{L_2(\M)}
\end{equation}
We will need the following result, which was proved in
\cite[Proposition 5.2 \& Remark 5.3]{NRW}

\begin{prop}
\label{regularity}
Let $\opL$ be as described above. If $f$ is a distributional solution
to $\opL f=g$, where $g\in H^s(\Man)$, $0\le s$, $s\in \R$, then
$f\in H^{s+2}(\Man)$. In addition, for any $t<s+1$ there is a constant
$C_t>0$ such that ,
$\|f\|_{H^{s+2}(\Man)} \le C_t (\| \opL f\|_{H^{s}(\Man)}+
\|f\|_{H^t(\Man)})$ and
$\|f\|_{H^{s+2}(\Man)} \le C\|\opL f\|_{H^s(\Man)}$ all hold.
\end{prop}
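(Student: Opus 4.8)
\emph{Proof proposal.} The plan is to combine the standard global regularity theory for second-order strongly elliptic operators on a closed manifold with the a priori lower bound \eqref{op_lower_bound_1}; the only ingredient beyond textbook elliptic theory is the use of \eqref{op_lower_bound_1} to eliminate the lower-order remainder term. First I would establish the membership $f\in H^{s+2}(\Man)$ and the basic a priori estimate. Since $\Man$ is compact, the distribution $f$ lies in $H^{\sigma}(\Man)$ for some (possibly very negative) $\sigma$. Writing $\opL$ in the charts of a finite atlas and using a subordinate partition of unity, one builds a parametrix $Q$ of order $-2$ with $Q\opL=\Id+R$, where $R$ is infinitely smoothing; then
\[
  f = Q(\opL f) - Rf = Qg - Rf ,
\]
and since $Q\colon H^{s}(\Man)\to H^{s+2}(\Man)$ is bounded and $Rf\in C^\infty(\Man)\subset H^{s+2}(\Man)$, we obtain $f\in H^{s+2}(\Man)$ together with
\[
  \|f\|_{H^{s+2}(\Man)} \le C\bigl(\|\opL f\|_{H^{s}(\Man)} + \|f\|_{H^{s+1}(\Man)}\bigr).
\]
(Alternatively one can avoid pseudodifferential calculus and bootstrap regularity chart-by-chart via Nirenberg's difference-quotient method together with G\r{a}rding's inequality.)

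Next, to upgrade the lower-order term to $H^{t}(\Man)$ for an arbitrary $t<s+1$, I would invoke the interpolation (log-convexity) inequality for the Sobolev scale on the compact manifold $\Man$: writing $s+1=\theta(s+2)+(1-\theta)t$ with $\theta=(s+1-t)/(s+2-t)\in(0,1)$ gives $\|f\|_{H^{s+1}(\Man)}\le \|f\|_{H^{s+2}(\Man)}^{\theta}\,\|f\|_{H^{t}(\Man)}^{1-\theta}$, so that Young's inequality yields $\|f\|_{H^{s+1}(\Man)}\le\varepsilon\|f\|_{H^{s+2}(\Man)}+C_\varepsilon\|f\|_{H^{t}(\Man)}$. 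Substituting this into the basic estimate and absorbing $\varepsilon\|f\|_{H^{s+2}(\Man)}$ on the left (choosing $\varepsilon$ small relative to $C$) produces the first claimed inequality $\|f\|_{H^{s+2}(\Man)}\le C_t\bigl(\|\opL f\|_{H^{s}(\Man)}+\|f\|_{H^{t}(\Man)}\bigr)$.

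Finally, for the clean estimate I would specialize to $t=0$ (legitimate since $0<s+1$ because $s\ge0$) and apply \eqref{op_lower_bound_1}: $\|f\|_{L_2(\Man)}\le c_{\opL}^{-1}\|\opL f\|_{L_2(\Man)}\le c_{\opL}^{-1}\|\opL f\|_{H^{s}(\Man)}$, whence $\|f\|_{H^{s+2}(\Man)}\le C_0\bigl(1+c_{\opL}^{-1}\bigr)\|\opL f\|_{H^{s}(\Man)}$. I expect the main effort to be bookkeeping rather than ideas: carefully globalizing the local elliptic estimates over the atlas and verifying the interpolation inequality on the full real Sobolev scale of $\Man$. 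The one conceptual point worth stressing is that strong ellipticity alone only yields a Fredholm-type estimate whose lower-order remainder cannot be dropped in general; it is precisely assumption \eqref{op_lower_bound_1} that rules out a kernel and allows that remainder to be removed entirely, which is what the second inequality records.
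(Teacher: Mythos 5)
The paper does not prove this proposition itself: it imports it verbatim from \cite[Proposition 5.2 and Remark 5.3]{NRW}, so there is no internal argument to compare against. Your proof is correct and follows the route one would expect that reference (and any standard treatment) to take: a global elliptic a priori estimate $\|f\|_{H^{s+2}(\Man)}\le C\bigl(\|\opL f\|_{H^s(\Man)}+\|f\|_{H^{s+1}(\Man)}\bigr)$ obtained either from a parametrix on the closed manifold or chart-by-chart with a partition of unity; the interpolation inequality $\|f\|_{H^{s+1}}\le\|f\|_{H^{s+2}}^{\theta}\|f\|_{H^{t}}^{1-\theta}$ together with Young's inequality and absorption of the $\varepsilon\|f\|_{H^{s+2}}$ term to lower the remainder to $H^{t}$; and finally the a priori lower bound \eqref{op_lower_bound_1}, applied at $t=0$ (legitimate since $s\ge 0$), to eliminate the remainder entirely. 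Your derivation also makes transparent where the hypothesis $t<s+1$ comes from: it is exactly what keeps the interpolation exponent $\theta=(s+1-t)/(s+2-t)$ in $(0,1)$. One small remark: on the parametrix route the remainder $R$ is already infinitely smoothing, so $\|Rf\|_{H^{s+2}(\Man)}\le C\|f\|_{H^{t}(\Man)}$ holds directly for every $t$ and the interpolation step is redundant there; it is genuinely needed only on the chart-by-chart alternative you also mention. Either way the argument is sound and the conclusion unchanged.
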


For our purposes, we will take $s=0$ and use the fact that
$H^k(\Man)=W_2^k(\Man)$. Since $\opL$ is second order differential
operator we have
$\|\opL f \|_{W_2^k(\Man)} \le C \|f\|_{W^{k+2}_2(\Man)}$. The
equivalence of $H^k$ and $W_2^k$ imply that
$\|\opL f \|_{H^k} \le C \|f\|_{H^{k+2}}$. Putting this together with
the inequality for $s=0$ in the proposition above, we have
\begin{equation}
\label{equiv}
\begin{cases}
  \|\opL f\|_{H^k(\Man) }\le \Gamma_1 \|f\|_{{H^{k+2}}(\Man)}\\
  \|f\|_{H^{k+2}(\Man)} \le \Gamma_2 \|\opL f\|_{H^k(\Man)}.
 \end{cases}
\end{equation}

The set of equations imply that $\opL: H^k \to H^{k+2}$ and
$\opL^{-1}:H^{k+2}\to H^k$ are both bounded.

\section{Spherical Basis Functions} \label{SBFs}

Let $\{Y_{\ell,m}:\ell=0,\ldots,\infty; m=0\ldots N_{\ell,d} \}$ be the
set of (real) spherical harmonics on $\sph^d$
\cite{Mueller-66-1,Stein-book}, where $N_{\ell,d}$ is the dimension of
the space of order $\ell$ spherical harmonics, which we denote by
$\scal_\ell$. Together, these form an orthonormal basis for
$L_2(\sph^d)$. Spherical harmonics are eigenfunctions of the
Laplace-Beltrami operator $\Delta$ on $\sph^d$. The eigenvalues of
$-\Delta$ are $\lambda_\ell = \ell(\ell + d-1)$. The eigenspace
corresponding to $\lambda_\ell$ is degenerate, and has dimension
\begin{equation}
\label{dim_ell}
N_{d,\ell} = 
\left\{
\begin{array}{cc}
1,&\ell=0,\\[6pt]
\displaystyle{\frac{(2\ell+d-1) 
\Gamma(\ell+d-1)}{\Gamma(\ell+1)\Gamma(d)}} \sim \ell^{d-1}\,,
&\ell\ge 1\,.
\end{array}\right. .
\end{equation}
A zonal function is a rotationally invariant kernel of the form
\begin{equation}\label{ultra_sph}
  Z(x \mathbf \cdot y):= \sum_{\ell=0}^\infty\widehat Z_\ell
  \frac{\ell+\lambda_d}{\lambda_d\omega_d}\ultra {\lambda_d} \ell
  (x{\mathbf\cdot} y), \ \text{where } \ultra {\lambda_d} \ell
  (x{\mathbf\cdot} y)= \frac{\lambda_d\,\omega_d}{\ell + \lambda_d}
  \sum_{m=0}^{N_{d,\ell}} Y_{\ell,m}(x) Y_{\ell,m}(y),\ \lambda_d= 
  \frac{d-1}{2}.
\end{equation}
Here, $\ultra {\lambda_d} \ell (\cdot)$ the ultraspherical polynomial
of order $\lambda_d$ and degree $\ell$.

We can now define a positive definite \emph{Spherical Basis Function}
(SBF). It is a zonal function in which all of the $\widehat Z_\ell$'s
are positive. A \emph{Strictly, Conditionally, Positive Definite
Function} (SCPD) of order $L$ is a zonal function for which
$\widehat Z_\ell>0$ for $\ell\ge L$ and is either $0$ or negative when
$\ell=L-1$

Take $\PP_\ell$ to be the orthogonal projection of $L_2(\sph^d)$ onto
$\scal_\ell$ and consider the operator
$\LL=\sqrt{\lambda_d-\Delta}=
\sum_{\ell=0}^\infty(\ell+\lambda_d)\PP_\ell$ defined in
\eqref{pot_space_LL}. It is easy to show that the kernel of $\PP_\ell$
is given by
$P_\ell(x\mathbf \cdot y) = \frac{\ell +
  \lambda_d}{\lambda_d\,\omega_d}\ultra {\lambda_d} \ell
(x{\mathbf\cdot} y).$ We may use this to define a particularly
important class of kernels to be used here.

Let $\beta>0$ and let $G_{\beta}$ be the fundamental solution to
$\LL^{\beta}G_\beta=\delta$; $G_{\beta}$ is a zonal kernel with an
expansion in ultraspherical polynomials having coefficients
$\widehat G_\beta(\ell)=(\ell+\lambda_d)^{-\beta}$:
\begin{equation}\label{ultraspherical_SBF}
  G_\beta(x\mathbf \cdot y)= \sum_{\ell=0}^\infty \widehat G_\beta(\ell)
  \frac{\ell+\lambda_d}{\lambda_d\omega_d}\ultra {\lambda_d} \ell
  (x{\mathbf\cdot} y), \ x,y \in S^d,
\end{equation}
This kernel is a positive definite SBF. Another related kernel, also a
positive definite SBF, is
\begin{equation}\label{G_psi}
  \Psi_\beta:=G_\beta+ G_\beta\ast \psi, \ \widehat \Psi_\beta =
  (\ell+\lambda_d)^{-\beta}(1+\widehat \psi(\ell))
\end{equation}
where $\psi\in L_1$ satisfies $\widehat \psi(\ell)+1>0$. These SBFs
were discussed in detail in \cite[Section 2.3]{MNPW}.

We will need to strengthen the Bernstein inequality in \cite[Theorem
6.1]{MNPW}, which states that for $g$ in the SBF network
$S_X(\Psi_\beta):= \{\sum_{\xi\in X}c_\xi\Psi_\beta((\ )\mathbf \cdot \xi)\}$,
\begin{equation}\label{thm6.1_MNPW}
\|g\|_{H^\gamma_p(\sph^d)}\le Cq^{-\gamma}\|g\|_{L_p(\sph^d)},
\end{equation}
provided $0<\gamma <\beta-d/p'$ and $1\le p \le \infty$.

\begin{proposition}\label{soblev_bernstein}
  Let $g_\beta:=\sum_{\xi\in X}c_\xi\Psi_\beta((\ )\mathbf \cdot \xi)$
  and suppose that $\gamma>0, \varepsilon>0$ satisfy
  $\gamma+\varepsilon <\beta-d/p'$. Then
\begin{equation}\label{L_p_sob_bern}
  \|g_\beta\|_{H^{\varepsilon+\gamma}_p}\le
  Cq^{-\gamma}_X\|g_\beta\|_{H^{\varepsilon}_p}
\end{equation}
\end{proposition}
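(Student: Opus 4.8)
The plan is to reduce the claimed inequality \eqref{L_p_sob_bern} to the already-available Bernstein inequality \eqref{thm6.1_MNPW} by exploiting the fact that the potential space norms are defined through the spectral operator $\LL = \sqrt{\lambda_d - \Delta}$, which acts diagonally on spherical harmonics. Writing the target inequality as
\begin{equation*}
\|\LL^{\varepsilon+\gamma} g_\beta\|_{L_p} \le C q_X^{-\gamma}\|\LL^{\varepsilon} g_\beta\|_{L_p},
\end{equation*}
the key observation is that $\LL^{\varepsilon} g_\beta$ is again an element of a shifted SBF network: since $\widehat{\Psi_\beta}(\ell) = (\ell+\lambda_d)^{-\beta}(1+\widehat\psi(\ell))$ and $\LL^\varepsilon$ multiplies the $\ell$-th Fourier component by $(\ell+\lambda_d)^\varepsilon$, we have $\LL^\varepsilon \Psi_\beta = \Psi_{\beta-\varepsilon}$ (up to the same convolution factor $1+\widehat\psi$, which is harmless). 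Hence $h := \LL^\varepsilon g_\beta = \sum_{\xi\in X} c_\xi \Psi_{\beta-\varepsilon}((\ )\mathbf\cdot\xi) \in S_X(\Psi_{\beta-\varepsilon})$ with the \emph{same} coefficients $c_\xi$ and the same centers $X$, so $q_X$ is unchanged.

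The second step is then to apply \eqref{thm6.1_MNPW} to $h$ with the smoothness parameter $\gamma$ and the new decay parameter $\beta' = \beta - \varepsilon$: this requires $0 < \gamma < \beta' - d/p' = \beta - \varepsilon - d/p'$, which is exactly the hypothesis $\gamma + \varepsilon < \beta - d/p'$. Thus \eqref{thm6.1_MNPW} gives
\begin{equation*}
\|h\|_{H^\gamma_p} \le C q_X^{-\gamma}\|h\|_{L_p},
\end{equation*}
and unwinding $h = \LL^\varepsilon g_\beta$ yields $\|\LL^{\gamma}\LL^\varepsilon g_\beta\|_{L_p} \le C q_X^{-\gamma}\|\LL^\varepsilon g_\beta\|_{L_p}$, which is \eqref{L_p_sob_bern} after identifying $\|\LL^{\gamma+\varepsilon} g_\beta\|_{L_p} = \|g_\beta\|_{H^{\gamma+\varepsilon}_p}$ and $\|\LL^\varepsilon g_\beta\|_{L_p} = \|g_\beta\|_{H^\varepsilon_p}$.

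The main obstacle I anticipate is the bookkeeping around the convolution factor $1+\widehat\psi(\ell)$ in \eqref{G_psi}: one must verify that $\LL^\varepsilon$ maps $S_X(\Psi_\beta)$ into a network of the same structural type $\Psi_{\beta-\varepsilon} = G_{\beta-\varepsilon} + G_{\beta-\varepsilon}\ast\psi$ to which \cite[Theorem 6.1]{MNPW} genuinely applies, i.e., that $\psi$ is unchanged and $\widehat\psi(\ell)+1>0$ still holds (which it does, as $\psi$ is fixed) and that $\beta - \varepsilon > 0$ so that $G_{\beta-\varepsilon}$ is still a well-defined positive definite SBF — this is guaranteed since $\beta - \varepsilon > \beta - \varepsilon - d/p' > \gamma > 0$. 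A minor alternative, if one prefers not to invoke the structural form, is to prove \eqref{thm6.1_MNPW} directly for $H^\varepsilon_p$-to-$H^{\varepsilon+\gamma}_p$ by following the proof in \cite{MNPW} verbatim with the weight shifted, but the spectral-calculus reduction above is cleaner and avoids reworking that argument.
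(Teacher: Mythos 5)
Your proof is correct and follows essentially the same approach as the paper's: commute $\LL^\varepsilon$ through the network to shift the kernel parameter from $\beta$ to $\beta-\varepsilon$, so that $\LL^\varepsilon g_\beta \in S_X(\Psi_{\beta-\varepsilon})$, and then apply \eqref{thm6.1_MNPW} to the shifted network. In fact your version fixes an apparent sign typo in the paper's proof, which writes $\Phi_{\beta+\varepsilon}$ and $g_{\beta+\varepsilon}$ where $\beta-\varepsilon$ is clearly intended, since only then does the hypothesis $\gamma+\varepsilon<\beta-d/p'$ coincide with the condition $0<\gamma<(\beta-\varepsilon)-d/p'$ needed to invoke \eqref{thm6.1_MNPW}.
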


\begin{proof}
From \eqref{G_psi}, we see that $\LL^{\gamma+\varepsilon}\Phi_\beta
= \LL^{\gamma}\Phi_{\beta+\varepsilon}$. Consequently,
$\LL^{\gamma+\varepsilon}g_\beta=\LL^\gamma g_{\beta+\varepsilon}$, and so
\[
  \|g_\beta\|_{H_p^{\gamma+\varepsilon}}=
  \|g_{\beta+\varepsilon}\|_{H_p^{\gamma}}\le Cq_X^{-\gamma}
  \|g_{\beta+\varepsilon}\|_{L_p}=Cq^{-\gamma}_X\|\LL^\varepsilon g_{\beta}\|_{L_p}=
  Cq^{-\gamma}_X\|g_{\beta}\|_{H^\varepsilon_p},
\]
which completes the proof.
\end{proof}

\begin{remark} \rm Later, we will need the special case in which
  $\gamma=1$, $\varepsilon=2$, $p=2$ and $3+d/2<\beta$:\rm
\begin{equation}\label{special_bern}
  \|g_\beta\|_{H^3}\le Cq^{-1}_X\|g_\beta\|_{H^2}
\end{equation}
\end{remark}

For $p=2$, we can extend the results in
Proposition~\ref{soblev_bernstein} to certain strictly conditionally
positive definite SBFs (SCPDs) of order $L$. These include the
thin-plate splines restricted to $\sph^d$, which are defined in
\eqref{TPS}.

Let $\Pi_{L-1}$ be the set of all spherical harmonics of degree $L-1$
or less. A kernel $\phi(x\mathbf \cdot y)$ is said to be SCPD if the
collocation matrix
$A=(\phi(\xi_i\mathbf \cdot \xi_j)_{\xi_i,\xi_j\in X}$ is positive
definite when restricted to the span of all $c\in \R^{|X|\times |X|}$
satisfying $\sum_{\xi\in X} c_\xi p(\xi)= 0 \ \forall p\in
\Pi_{L-1}$. If $L\ge 1$ is the smallest integer for which this
condition holds, $\phi$ is said to have \emph{order} $L$. (If $L=0$,
$\phi$ is a positive definite SBF.) The network\footnote{Since the
  order $L$ is unique, $L$ is implicit in $S_X(\phi)$. No extra
  notation is needed.}for an SCPD of order $L$ is defined by
\begin{equation}\label{SCPD_net}
    S_X(\phi)=\{\sum_{\xi\in X} c_\xi \phi((\ ) \mathbf \cdot\xi):
  \sum_{\xi\in X} c_\xi p(\xi)= 0 \ \forall p\in \Pi_{L-1} \text{ and
  } \xi\in X\} \cup \Pi_{L-1}.
\end{equation}
Another way to define the order is to look at the expansion of $\phi$
in a basis of ultraspherical polynomials. If $\widehat \phi(\ell)>0$
for all $\ell \ge L$, but is $0$ or negative for $\ell = L-1$, then
the order is $L$. 

Our aim is to obtain Bernstein inequalities for a special class of
SCPD kernels. Suppose $\Psi_\beta$ is given by \eqref{G_psi}. If for
all $\ell\ge L$ the SCPD kernel $\phi$ satisfies
$\widehat \phi(\ell)= \widehat \Psi_\beta(\ell)$, then we will say
$\phi$ is a $\beta$-class SCPD kernel of order $L$. Since $\phi$ and
$\Psi_\beta$ differ in their ultraspherical expansions only for
$\ell\le L-1$, we hve that $\phi-\Psi_\beta=p_{L-1}$, where
\begin{equation}\label{poly_term}
  p_{L-1}(x{\mathbf\cdot} y)= 
  \sum_{\ell=0}^{L-1} b_\ell \ultra {\lambda_d} \ell
  (x{\mathbf\cdot} y).
\end{equation}

There are several properties that will be useful. We collect these in
the Lemma below.
\begin{lemma}\label{SCPD-beta}
    Let the $c_\xi$'s satisfy the condition in \eqref{SCPD_net}. Then
  $\sum_{\xi\in X} c_\xi p_{L-1}(x{\mathbf\cdot} \xi) = 0$,
  $\forall x\in \sph^d$. In addition, we have that
  $\sum_{\xi\in X}c_\xi \phi(x{\mathbf\cdot} \xi) = \sum_{\xi\in X}
  c_\xi \Psi_\beta(x{\mathbf\cdot} \xi)$. Finally, these two sums
  are orthogonal to $\Pi_{L-1}$ in all of the Sobolev spaces $H^\mu$,
  with $\mu\ge 0$.
\end{lemma}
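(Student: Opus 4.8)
The plan is to translate each assertion into a statement about spherical-harmonic (Fourier) coefficients, using two facts: the moment condition on the $c_\xi$'s in \eqref{SCPD_net} annihilates every spherical harmonic of degree $\le L-1$, and the operator $\LL$ of \eqref{pot_space_LL} is diagonalized by the $Y_{\ell,m}$ with eigenvalue $(\ell+\lambda_d)$ on $\scal_\ell$, so that an $H^\mu$-inner product against an element of $\Pi_{L-1}$ is, up to a positive factor, an $L_2$-inner product against an element of $\Pi_{L-1}$.

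First I would prove the first identity. Fix $x\in\sph^d$ and regard $\xi\mapsto p_{L-1}(x{\mathbf\cdot}\xi)$ as a function on $\sph^d$. From \eqref{poly_term} and the identity $\ultra{\lambda_d}{\ell}(x{\mathbf\cdot}\xi)=\tfrac{\lambda_d\omega_d}{\ell+\lambda_d}\sum_m Y_{\ell,m}(x)Y_{\ell,m}(\xi)$ in \eqref{ultra_sph}, this function is a finite linear combination of the $Y_{\ell,m}$ with $\ell\le L-1$, hence lies in $\Pi_{L-1}$; the condition in \eqref{SCPD_net} then gives $\sum_{\xi\in X}c_\xi p_{L-1}(x{\mathbf\cdot}\xi)=0$ for every $x$. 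Since $\phi-\Psi_\beta=p_{L-1}$ as zonal kernels (see \eqref{poly_term}), subtracting the two sums $\sum_\xi c_\xi\phi(x{\mathbf\cdot}\xi)$ and $\sum_\xi c_\xi\Psi_\beta(x{\mathbf\cdot}\xi)$ leaves exactly $\sum_\xi c_\xi p_{L-1}(x{\mathbf\cdot}\xi)=0$, which is the second assertion.

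For the orthogonality claim, set $g:=\sum_{\xi\in X}c_\xi\Psi_\beta(x{\mathbf\cdot}\xi)$; by the previous step $g=\sum_\xi c_\xi\phi(x{\mathbf\cdot}\xi)$ too, so it is enough to treat $g$. Expanding $\Psi_\beta$ in ultraspherical polynomials via \eqref{ultra_sph} and \eqref{G_psi}, and writing $P_\ell(x{\mathbf\cdot}\xi)=\tfrac{\ell+\lambda_d}{\lambda_d\omega_d}\ultra{\lambda_d}{\ell}(x{\mathbf\cdot}\xi)$ for the reproducing kernel of $\scal_\ell$, one gets $\Psi_\beta(x{\mathbf\cdot}\xi)=\sum_{\ell\ge0}\widehat\Psi_\beta(\ell)P_\ell(x{\mathbf\cdot}\xi)$, whence $\PP_\ell g=\widehat\Psi_\beta(\ell)\sum_{\xi\in X}c_\xi P_\ell(x{\mathbf\cdot}\xi)$. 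For $\ell\le L-1$ the map $\xi\mapsto P_\ell(x{\mathbf\cdot}\xi)$ lies in $\Pi_{L-1}$ for each fixed $x$, so \eqref{SCPD_net} forces $\PP_\ell g=0$ for all $0\le\ell\le L-1$; i.e., $g$ has no spherical-harmonic component of degree $\le L-1$. To finish, for $\mu\ge0$ and any $Y_{\ell,m}$ with $\ell\le L-1$, the self-adjointness of $\LL$ together with $\LL^\mu Y_{\ell,m}=(\ell+\lambda_d)^\mu Y_{\ell,m}$ give
\[
  \langle g,Y_{\ell,m}\rangle_{H^\mu}=\langle\LL^\mu g,\LL^\mu Y_{\ell,m}\rangle_{L_2}=(\ell+\lambda_d)^{2\mu}\langle\PP_\ell g,Y_{\ell,m}\rangle_{L_2}=0,
\]
so $g$ is $H^\mu$-orthogonal to $\Pi_{L-1}$ for every $\mu\ge0$.

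I do not expect a genuine obstacle: the argument is essentially bookkeeping with spherical-harmonic expansions. The one point needing a little care is the last step — that $H^\mu$-orthogonality to $\Pi_{L-1}$ really is equivalent to the vanishing of the degree-$\le L-1$ coefficients of $g$ — which works precisely because $\LL$ is diagonalized by the $Y_{\ell,m}$ and $\Pi_{L-1}$ is $\LL$-invariant; if one prefers not to presuppose $g\in H^\mu$, one simply reads $\langle g,p\rangle_{H^\mu}$ as $\langle g,\LL^{2\mu}p\rangle_{L_2}$, which is legitimate since $\LL^{2\mu}p\in\Pi_{L-1}$ is still a smooth function.
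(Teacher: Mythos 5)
Your proposal is correct and follows essentially the same route as the paper: expand $p_{L-1}$ via the addition formula in \eqref{ultra_sph} to identify $\xi\mapsto p_{L-1}(x\cdot\xi)$ with an element of $\Pi_{L-1}$, invoke the moment condition in \eqref{SCPD_net}, use $\phi-\Psi_\beta=p_{L-1}$, and note that the spherical-harmonic coefficients of degree $\le L-1$ vanish so that $L_2$- (and, since $\LL$ acts diagonally, $H^\mu$-) orthogonality to $\Pi_{L-1}$ follows. Your treatment of the $H^\mu$-orthogonality is slightly more explicit than the paper's one-line ``similar argument,'' but the idea is identical.
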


\begin{proof}
Using \eqref{ultra_sph}, we have for every $x\in \sph^d$
\[
  \sum_{\xi\in X} c_\xi p_{L-1}(x{\mathbf\cdot} \xi) =
  \sum_{\ell=0}^{L-1} b_\ell\frac{\lambda_d\,\omega_d}{\ell +
    \lambda_d} \sum_{m=0}^{N_{d,\ell}} Y_{\ell,m}(x)\big\{\sum_{\xi
    \in X}c_\xi Y_{\ell,m}(\xi)\big\}.
\]
Since $Y_{\ell,m}$ has $\ell\le L-1$, the function above is in
$\Pi_{L-1}$. The condition on the $c_\xi$'s implies that
$\sum_{\xi\in X}c_\xi Y_{\ell,m}(\xi)=0$, which establishes that the
first sum is $0$. The second follows from this and the fact that
$\phi-\Psi_\beta=p_{L-1}$. To obtain the orthogonality, we examine the
calculation above. It shows that in the expansion of
$\sum_{\xi\in X}c_\xi \phi((\ ){\mathbf\cdot} \xi)$ in the
$Y_{\ell,m}$'s has nonzero coefficients only for $\ell\ge L$. Thus it
is orthogonal to $\Pi_{L-1}$ in $L_2$.  Similar argument yields the
result for $H^\mu$.
\end{proof}

We now turn to establishing a Bernstein inequality for $\beta$-class
SCPD kernels, one that is similar to the one in
Proposition~\ref{soblev_bernstein}. 
\begin{theorem}\label{general_L2 Bernstein}
  Let $\phi$ be a $\beta$-class SCPD kernel and let $g$ be in the
  network $S_X(\phi)$ defined in \eqref{SCPD_net}. Then $g$ satisfies
  the Bernstein inequality
  $\|g\|_{H_2^{\gamma+\varepsilon}}\le
  Cq_X^{-\gamma}\|g\|_{H^\varepsilon}$, where $\gamma>0$ and
  $\varepsilon>0$ satisfy $\gamma+\varepsilon<\beta-d/2$.
\end{theorem}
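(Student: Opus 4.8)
The plan is to decompose an arbitrary $g\in S_X(\phi)$ into its SBF part and its polynomial part, bound each separately, and recombine using orthogonality, thereby reducing the statement to the positive-definite case already established in Proposition~\ref{soblev_bernstein}. By \eqref{SCPD_net} we may write $g=g_\phi+\pi$, where $g_\phi=\sum_{\xi\in X}c_\xi\phi((\ )\mathbf \cdot \xi)$ with the coefficients satisfying the moment condition $\sum_{\xi}c_\xi p(\xi)=0$ for all $p\in\Pi_{L-1}$, and $\pi\in\Pi_{L-1}$ (possibly zero). Lemma~\ref{SCPD-beta} then tells us two things: first, $g_\phi=\sum_{\xi\in X}c_\xi\Psi_\beta((\ )\mathbf \cdot \xi)=:g_\beta$, so $g_\beta$ lies in the (unconstrained) SBF network $S_X(\Psi_\beta)$; second, $g_\beta$ is orthogonal to $\Pi_{L-1}$ in every Sobolev space $H^\mu$, $\mu\ge 0$. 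Since $\gamma+\varepsilon<\beta-d/2$ we also have $g_\beta\in H^{\gamma+\varepsilon}$, so the decomposition $g=g_\beta+\pi$ is an orthogonal one in both $H^{\varepsilon}$ and $H^{\gamma+\varepsilon}$, giving the Pythagorean identities
\[
  \|g\|_{H^{\gamma+\varepsilon}}^2=\|g_\beta\|_{H^{\gamma+\varepsilon}}^2+\|\pi\|_{H^{\gamma+\varepsilon}}^2,
  \qquad
  \|g\|_{H^{\varepsilon}}^2=\|g_\beta\|_{H^{\varepsilon}}^2+\|\pi\|_{H^{\varepsilon}}^2 .
\]

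For the SBF term, the condition $\gamma+\varepsilon<\beta-d/2=\beta-d/p'$ with $p=p'=2$ is exactly the hypothesis needed to apply Proposition~\ref{soblev_bernstein}, which yields $\|g_\beta\|_{H^{\gamma+\varepsilon}}\le Cq_X^{-\gamma}\|g_\beta\|_{H^{\varepsilon}}$. For the polynomial term, all norms on the finite-dimensional space $\Pi_{L-1}$ are equivalent, so $\|\pi\|_{H^{\gamma+\varepsilon}}\le C_L\|\pi\|_{H^{\varepsilon}}$ with $C_L$ depending only on $L$ and $d$; and since $q_X$ is bounded above by a constant depending only on $\sph^d$, one has $q_X^{-\gamma}\ge c_\gamma>0$, hence $\|\pi\|_{H^{\gamma+\varepsilon}}\le (C_L/c_\gamma)q_X^{-\gamma}\|\pi\|_{H^{\varepsilon}}$. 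Combining these two estimates with the Pythagorean identities and the trivial bounds $\|g_\beta\|_{H^{\varepsilon}},\|\pi\|_{H^{\varepsilon}}\le\|g\|_{H^{\varepsilon}}$ gives
\[
  \|g\|_{H^{\gamma+\varepsilon}}^2\le C^2q_X^{-2\gamma}\bigl(\|g_\beta\|_{H^{\varepsilon}}^2+\|\pi\|_{H^{\varepsilon}}^2\bigr)=C^2q_X^{-2\gamma}\|g\|_{H^{\varepsilon}}^2,
\]
which is the asserted Bernstein inequality after taking square roots (with a possibly enlarged constant $C$).

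The only genuinely delicate point is that the splitting $g=g_\beta+\pi$ must be orthogonal simultaneously on all the relevant Sobolev scales, so that the polynomial part --- which enjoys no $q_X^{-\gamma}$ improvement --- can be absorbed without damage; this is precisely the last assertion of Lemma~\ref{SCPD-beta}, so no real obstacle remains. Everything else is bookkeeping: the argument is a clean reduction to the positive-definite Bernstein inequality of Proposition~\ref{soblev_bernstein}.
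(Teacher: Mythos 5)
Your proof is correct and follows essentially the same route as the paper's: decompose $g=g_\beta+Q$ via Lemma~\ref{SCPD-beta}, exploit the $H^\mu$-orthogonality to get the Pythagorean identity, apply Proposition~\ref{soblev_bernstein} to $g_\beta$, handle the polynomial part separately, and recombine. The one small difference is in how the polynomial term is controlled: the paper invokes a slightly modified Bernstein inequality for spherical harmonics from \cite{MNPW}, whereas you use finite-dimensionality of $\Pi_{L-1}$ together with the fact that $q_X$ is bounded above on $\sph^d$ so $q_X^{-\gamma}$ is bounded below --- this is a bit more elementary and self-contained, but the overall argument is the same.
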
  
\begin{proof}
  Since $g\in S_X(\phi)$, it has the form
  $g=\sum_{\xi\in X}c_\xi \phi((\ ){\mathbf\cdot} \xi) + Q$, where
  $Q\in \Pi_{L-1}$. By Lemma~\ref{SCPD-beta},
  $\sum_{\xi\in X}c_\xi \phi((\ ){\mathbf\cdot} \xi)=\sum_{\xi\in
    X}c_\xi \Psi_\beta((\ ){\mathbf\cdot} \xi)=:g_\beta$ is orthogonal
  to $\Pi_{L-1}$ in $H^\mu$, for all $\mu\ge 0$, and hence to
  $Q$. Thus $g=g_\beta +Q$ satisfies
  $\|g\|_{H^\mu}^2=\|g_\beta\|_{H^\mu}^2+ \|Q\|_{H^\mu}^2$.
  Proposition~\ref{soblev_bernstein} applies to $g_\beta$ and, if we
  slightly modify \cite{MNPW}[Theorem 4.19], to $Q$ as well. Letting
  $p=2$ we have that
  $\|g\|_{H^{\gamma+\varepsilon}}^2=\|g_\beta\|^2_{H^{\gamma+\varepsilon}}
  + \|Q\|_{H^{\gamma+\varepsilon}}^2$. Applying Bernstein
  inequalities from Proposition~\ref{soblev_bernstein} we have that
  $\|g_\beta\|^2_{H^{\gamma+\varepsilon}}\le Cq^{-2\gamma}
  \|g_\beta\|^2_{H^{\varepsilon}}$ and, after possibly adjusting the
  constant $C$,
  $\|Q\|^2_{H^{\gamma+\varepsilon}}\le
  Cq_X^{-2\gamma}\|Q\|^2_{H^{\varepsilon}}$. Adding these up and using
  the orthogonality of $g_\beta$ and $Q$, we have that
  $\|g\|_{H^{\gamma+\varepsilon}}^2\le
  Cq_X^{-2\gamma}\|g\|^2_{H^{\varepsilon}}$. Taking square roots then
  yields the Bernstein inequality that we wanted.
\end{proof}

The thin-plate splines\footnote{We are including what are others call
  potential splines as thin-plate splines. These are frequently
  treated as a separate category.} form one of the most important of the
classes of SCPD kernels. These are defined in \cite[Section
8.3]{Wendland-05-01}; their Fourier-Legendre coefficients are computed
in \cite[Section 4.2]{NSW_2007}, with a slightly different
normalization than we use here. The thin-plate splines themselves are
given below\footnote{Coefficients for $\ell\le s$ may be found in
  \cite{Baxter}.}.
\begin{equation}\label{TPS}
  \left.
    \begin{array}{l}
\displaystyle{
\phi_s(t)=
\left\{
\begin{array}{cc}
  (-1)^{\lceil (s)_+\rceil}(1-t)^s, & s > - \frac{d}{2},  \ s \not\in \NN\\[5pt]
  (-1)^{s+1}(1-t)^s\log(1-t), &  s\in\NN.
\end{array}
\right.}\\[18pt]
  \hat\phi_s(\ell)=
      C_{s,d}\frac{\Gamma(\ell-s)}{\Gamma(\ell+s+d)} \sim \ell^{2s+d} \sim
      \lambda_\ell^{s+d/2}. \ \ell>s,
\end{array}
\right\}
\end{equation}
where the factor $C_{s,d}$ is given by
\[
C_{s,d}:=2^{s+n}\pi^{\frac{d}{2}}\Gamma(s+1)\Gamma(s+\frac{d}{2}) 
\left\{
\begin{array}{ll}
 \frac{\sin(\pi s)}{\pi} & s > - \frac{d}{2},  \ s \not\in \NN\\[5pt]
1, & s\in\NN.
\end{array}
\right.
\]

When $s$ is an integer or half integer, if $\ell>s$, then
$\hat\phi_s(\ell)$ is analytic in $\ell$. In \cite[Section 3]{MNPW},
it was shown that
\[
  \hat\phi_s(\ell)=C_{s,n}\big(\widehat{G}_{2s+d}(1+\hat\psi(\ell))\big),\
  \ell>s,
\]
where $\psi\in L_1(\Sph^d)$. For $\ell\le s$ the Fourier-Legendre
coefficients for the thin-plate splines may be found in \cite[Section
2.3]{Baxter}. When $\ell\le s$, the coefficients for the $L_1$
function can be freely chosen, as long as they are non negative. By
applying Theorem~\ref{general_L2 Bernstein}, we obtain his result:

\begin{corollary}\label{TPS Bernstein}
  Let $S_X(\phi_s)$ be the network \eqref{SCPD_net}, with $\phi$
  replaced by $\phi_s$. Here $(-1)^{\lceil (s)_+\rceil}(1-t)^s$ holds
  for $s=k+\frac12$, and $(-1)^{s+1}(1-t)^s\log(1-t)$ holds for
  $s\in \NN$. Then $g\in S_X$ satisfies the Bernstein inequality
  $\|g\|_{H^{\gamma+\varepsilon}}\le
  Cq_X^{-\gamma}\|g\|_{H^\varepsilon}$, where $\gamma>0$ and
  $\varepsilon>0$ satisfy $\gamma+\varepsilon<\beta-d/2$.
\end{corollary}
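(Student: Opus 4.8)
The plan is to deduce the corollary directly from Theorem~\ref{general_L2 Bernstein}: I will argue that, after dividing by the nonzero normalizing constant $C_{s,d}$ of \eqref{TPS}, each admissible thin-plate spline $\phi_s$ is a $\beta$-class SCPD kernel of some order $L$ with $\beta=2s+d$, and then simply invoke that theorem. Multiplying a kernel by a nonzero scalar changes neither the network \eqref{SCPD_net} (the coefficients $c_\xi$ merely rescale and the polynomial piece $\Pi_{L-1}$ is untouched) nor the Sobolev norms occurring in the estimate, so $S_X(\phi_s)=S_X(C_{s,d}^{-1}\phi_s)$, and the Bernstein inequality that Theorem~\ref{general_L2 Bernstein} gives for the latter is precisely the asserted one, with the admissible range $\gamma+\varepsilon<\beta-d/2$ reading $\gamma+\varepsilon<2s+d/2$.

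To set up the identification I would use the computation of the Fourier--Legendre coefficients of $\phi_s$ recalled in \eqref{TPS} together with the factorization from \cite[Section~3]{MNPW}: for $\ell>s$ one has $C_{s,d}^{-1}\hat\phi_s(\ell)=\widehat G_{2s+d}(\ell)\bigl(1+\hat\psi(\ell)\bigr)$ with $\psi\in L_1(\Sph^d)$; since for $\ell>s$ the quotient $\hat\phi_s(\ell)/C_{s,d}=\Gamma(\ell-s)/\Gamma(\ell+s+d)$ is strictly positive (both $\Gamma$-arguments are positive there), we get $1+\hat\psi(\ell)>0$ for $\ell>s$, hence $C_{s,d}^{-1}\hat\phi_s(\ell)=\widehat\Psi_{2s+d}(\ell)>0$ for those $\ell$. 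For the finitely many remaining indices $\ell\le s$ I would appeal to the freedom in the low-frequency coefficients noted via \cite[Section~2.3]{Baxter}: prescribing those coefficients of $\psi$ to be nonnegative keeps $\psi\in L_1$ (only finitely many Fourier coefficients are altered) while ensuring $1+\hat\psi(\ell)>0$ for all $\ell\ge 0$, so that $\Psi_{2s+d}$ is a bona fide positive definite SBF of the form \eqref{G_psi}. Taking $L$ to be the order dictated by \eqref{SCPD_net} (the smallest integer for which $\widehat{C_{s,d}^{-1}\phi_s}(\ell)>0$ for all $\ell\ge L$; in particular $L\le\lfloor s\rfloor+1$), the relation $\widehat{C_{s,d}^{-1}\phi_s}(\ell)=\widehat\Psi_{2s+d}(\ell)$ holds for every $\ell\ge L$ because then $\ell>s$, which is exactly the definition of a $\beta$-class SCPD kernel of order $L$ with $\beta=2s+d$. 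Theorem~\ref{general_L2 Bernstein} then applies verbatim.

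The main obstacle here is not analytic but bookkeeping: one must check (i) that $C_{s,d}\neq 0$ for exactly the admissible parameters --- positive half-integers, where $\sin(\pi s)=\pm1$, and positive integers, where the defining factor is $1$ --- so that the normalization is legitimate; (ii) that the sign conventions $(-1)^{\lceil (s)_+\rceil}$ and $(-1)^{s+1}$ in \eqref{TPS}, together with the sign of $C_{s,d}$, are consistent with the MNPW factorization, i.e.\ that $C_{s,d}^{-1}\phi_s$ really does have the claimed positive high-frequency coefficients (this is where the cited computations of \cite{MNPW,NSW_2007,Baxter} are essential, especially in the logarithmic integer case); and (iii) that the $L_1$ function $\psi$ obtained this way genuinely satisfies the single hypothesis ``$\psi\in L_1$, $1+\hat\psi>0$'' that \eqref{G_psi} and Proposition~\ref{soblev_bernstein} demand, after the free choice on $\ell\le s$. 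Once these are in place, no new estimate is needed: the Bernstein bound is inherited from Theorem~\ref{general_L2 Bernstein} with $\beta=2s+d$, which yields the stated admissible exponent range.
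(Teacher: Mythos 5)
Your proposal follows the same route as the paper: use the factorization from \cite[Section~3]{MNPW} (together with the freedom in the low-frequency coefficients noted via \cite[Section~2.3]{Baxter}) to identify a scalar multiple of $\phi_s$ as a $\beta$-class SCPD kernel with $\beta=2s+d$, and then invoke Theorem~\ref{general_L2 Bernstein}. The one place you add value is in explicitly tracking the nonzero normalizing constant $C_{s,d}$ and its sign, which the paper quietly absorbs when it asserts the identification; since scaling affects neither $S_X(\phi_s)$ nor any Sobolev norm, this bookkeeping is harmless and your conclusion matches the paper's.
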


%%%%%%%%%%%%%%%%%%%%%%
%
%%
%
%%
%
%%%%%%%%%%%%%%%%%%%%%%

\section{Norming sets }
\label{S:Norming}

Assume that $W_N\subset C(\M)$ is an $N$-dimensional subspace of
$C(\M)$.  We wish to find a point set $Y \subset \M$ which serves as a
norming set for $W_N$ equipped with the $L_p(\M)$ norm.  Specifically,
we seek conditions on $W_N$ for which
\begin{equation}\label{norming_set}
  \bigl(\forall w\in W_N\bigr)
  \qquad 
  \|w\|_{L_p(\M)}\le C_{\mathsf{N}}\left( \frac{1}{M}
    \sum_{y\in {Y}} |w(y)|^p
  \right)^{1/p}
\end{equation}
holds with cardinality $M:=\#{Y}$ not much larger than $N:=\dim(W_N)$.
Ideally $M\le CN$ for a global constant, and, importantly, for a set
$Y$ which is distributed quasi-uniformly.

\paragraph{Marcinkiewicz-Zygmund inequalities via Nikolskii inequalities}
The existence and construction of such sets has recently been
investigated in \cite{DPSTT,limonova2022sampling,kashin2022sampling}).
If the space $W_N$ enjoys the Nikolskii inequalities below for all
$w\in W_N$,
\begin{equation*}
%\label{Nik}
\|w\|_{L_\infty(\M)} \le C_1 \sqrt{N}\|w\|_{L_2(\M)} \qquad
\text{and}
\qquad
\|w\|_{L_\infty(\M)} \le C_2 \|w\|_{L_{\log N}(\M)} ,
\end{equation*}
then by \cite[Theorem 2.2]{DPSTT} the Marcinkiewicz-Zygmund inequality
holds
\begin{equation}
\label{MZ}
(1-\epsilon) \|w\|_{L_p(\M)}^p
\le
 \frac{1}{M}\sum_{y\in {Y}} |w(y)|^p \le (1+\epsilon)
 \|w\|_{L_p(\M)}^p
\end{equation}
holds for all $w\in W_N$, where ${Y}\subset \M$ with 
 $M =\#\tilde{Y} \le  C N( \log N)^3 .$ 
 The lower bound in (\ref{MZ}) guarantees that  (\ref{norming_set})
 holds, although with 
 a large value of $M$ relative to $N$, 
 and without a guarantee of quasi-uniformity.

 Although the upper bound in (\ref{MZ}) is not relevant for our
 present purposes, it is worth mentioning that such estimates also
 play a role in kernel approximation (see \cite{Wenzel}, especially
 Theorem 7).
 
 \subsection{Norming sets via Bernstein inequalities}
 \label{norming_set_Y}
 We will show that if $\M$ is a compact Riemannian manifold without
 boundary and $W_N\subset C(\M)$ satisfies a suitable Bernstein
 inequality, then the norming set condition above follows,
 specifically (\ref{norming_set}) holds for a quasi-uniform set $Y$
 which satisfies $M\sim N$.

\begin{theorem}\label{Theorem:Norming_Set}
If $W_N$ is a space which satisfies the 
Bernstein inequality 
\begin{equation}
\label{Bern}
\|w\|_{W_p^{k}(\M)}\le C_{\mathfrak{B}} N^{k/d}\|w\|_{L_p(\M)}, \
\text{where }1\le p\le \infty,
\end{equation}
for all $w\in W_N$
then  there is a constant $\gamma >0$
so that for any  $Y\subset \M$ 
with $h_Y\le  \gamma  N^{-1/d}$ we have
\begin{equation}
\label{Norming_est_lemma}
\|w\|_{L_p(\M)} \le C_k
h_Y^{d/p} \| w\left|_Y\right.\|_{\ell_p(Y)} .
\end{equation}
In particular, it is possible to select a suitable norming set $Y$
which is quasi-uniform and has cardinality
$\# Y \le C_{\M} \rho^d 2^d\gamma^{-d} N$, where $\rho$ is the mesh
ratio and the constant $C_\M$ depends only on $\M$.
\end{theorem}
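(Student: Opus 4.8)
The plan is to derive the sampling inequality \eqref{Norming_est_lemma} from the Bernstein inequality \eqref{Bern} by a standard "local Taylor/Sobolev" argument, and then separately to produce a concrete quasi-uniform $Y$ of the right cardinality using a minimal $\epsilon$-net. For the first part, fix $w\in W_N$ and a point $x\in\M$; since $\M$ has bounded geometry, on a geodesic ball $\b(x,r)$ we may compare $|w(x)|$ to the average of $|w|$ (or to the values at nearby sample points) up to an error controlled by $\|\nabla w\|$ on that ball. Concretely, for any $\xi\in X$ with $d(x,\xi)\le h_Y$ we have, along a minimizing geodesic, $|w(x)-w(\xi)| \le h_Y \sup_{\b(x,h_Y)}\|\nabla w\|$. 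Integrating the $p$-th power over a suitable cover of $\M$ by the balls $\b(\xi,h_Y)$, $\xi\in Y$, and using that the cover has bounded overlap (the $N_2$ from the minimal $\epsilon$-net discussion), one obtains
\[
\|w\|_{L_p(\M)}^p \le C\Bigl( h_Y^{d}\sum_{\xi\in Y}|w(\xi)|^p + h_Y^{p}\,\|\nabla w\|_{L_p(\M)}^p\Bigr).
\]
Now invoke \eqref{Bern} with $k=1$: $\|\nabla w\|_{L_p(\M)}\le \|w\|_{W_p^1(\M)}\le C_{\mathfrak B}N^{1/d}\|w\|_{L_p(\M)}$, so the second term is bounded by $C(C_{\mathfrak B}h_Y N^{1/d})^p\|w\|_{L_p(\M)}^p$. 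Choosing $\gamma>0$ small enough that $h_Y\le \gamma N^{-1/d}$ forces this term to be at most $\tfrac12\|w\|_{L_p(\M)}^p$, and it can be absorbed on the left. Rearranging gives $\|w\|_{L_p(\M)}^p\le C h_Y^d \sum_{\xi\in Y}|w(\xi)|^p$, which is \eqref{Norming_est_lemma} after taking $p$-th roots (with $C_k$ absorbing the constants; the $p=\infty$ case follows by the usual limiting/direct argument).

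For the cardinality claim, take $\epsilon:=\gamma N^{-1/d}/2$ (or any $\epsilon\le \gamma N^{-1/d}$) and let $Y$ be a minimal $\epsilon$-net in $\M$ as described in Section~\ref{sobolev_centers_nets}: then $h_Y\le\epsilon\le\gamma N^{-1/d}$, $q_Y\ge\epsilon/2$, and $\rho_Y=h_Y/q_Y\le 2$, so $Y$ is quasi-uniform and the sampling inequality applies. To count $\#Y$: the balls $\b(p_j,\epsilon/2)$ are pairwise disjoint, so a volume comparison on a manifold of bounded geometry gives $\#Y\cdot c\,(\epsilon/2)^d \le \mathrm{vol}(\M)$, hence $\#Y\le C_\M(\epsilon/2)^{-d} = C_\M 2^d\epsilon^{-d} \le C_\M 2^d \gamma^{-d} N$; inserting a generic mesh ratio factor $\rho^d$ (which is $\le 2^d$ here) recovers the stated bound $\#Y\le C_\M \rho^d 2^d\gamma^{-d}N$.

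The main obstacle — and the step requiring the most care — is the first one: making the passage from pointwise values to the $L_p(\M)$ norm rigorous on a general manifold, in particular getting a genuinely bounded-overlap cover by balls centered at the points of $Y$ and controlling the local Poincaré/Taylor constants uniformly. This is exactly where bounded geometry of $\M$ is used, and where the $N_2$-independent-of-$\epsilon$ property of minimal nets is the right tool; on $\sph^d$ it is completely elementary, but stating it for $\M$ needs the covering and volume-doubling facts quoted earlier. A secondary (purely bookkeeping) point is tracking how the constant $C_k$ depends on $p$ and on $C_{\mathfrak B}$, and confirming the absorption argument is uniform in $p\in[1,\infty]$; I would handle $p=\infty$ by the same geodesic estimate without integration.
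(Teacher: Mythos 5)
There is a genuine gap in the first and central step of your proposal. Your plan is to deduce the sampling inequality from the first-order Bernstein bound via a mean-value argument, arriving at
\[
\|w\|_{L_p(\M)}^p \le C\Bigl( h_Y^{d}\sum_{\xi\in Y}|w(\xi)|^p + h_Y^{p}\,\|\nabla w\|_{L_p(\M)}^p\Bigr).
\]
This intermediate inequality does not follow from the pointwise estimate you invoke. The mean-value theorem gives $|w(x)|\le |w(\xi)|+h_Y\sup_{\b(\xi,2h_Y)}\|\nabla w\|$, and integrating the $p$-th power over $\b(\xi,h_Y)$ produces the term $h_Y^{d}\bigl(\sup_{\b(\xi,2h_Y)}\|\nabla w\|\bigr)^p$, \emph{not} $\int_{\b(\xi,2h_Y)}\|\nabla w\|^p$. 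The passage from the local supremum to a local $L_p$ average goes the wrong way: $(\sup)^p\ge$ average, not $\le$. In fact a local inequality of the form $\|w\|_{L_p(\b)} \le C\bigl(h^{d/p}|w(\xi)| + h\,\|\nabla w\|_{L_p(\b)}\bigr)$ with constant independent of $w$ requires the Sobolev embedding $W_p^1(\b)\hookrightarrow C^0(\b)$, i.e.\ $1>d/p$; for the case of interest $p=2$, $d\ge 2$ this fails, and the constant must blow up. This is exactly why the paper's Lemma~\ref{Lemma:Sampling} (a manifold version of Madych's sampling theorem) is stated for order $k>d/2$: the local argument is a $k$-th order Bramble--Hilbert/Taylor estimate, not a first-order mean-value one, and only then can the $W_p^k$ term on the right be absorbed via the Bernstein inequality \eqref{Bern} with the same $k$. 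Using only $k=1$, your absorption step and the claimed inequality collapse for $p\le d$; the $p=\infty$ case you note does go through with $k=1$, but that is the trivial endpoint.

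Your second part (choosing a minimal $\epsilon$-net with $\epsilon=\gamma N^{-1/d}/2$, observing $\rho_Y\le 2$, and counting by disjointness of the $\b(p_j,\epsilon/2)$ plus volume comparison) is correct and is essentially the construction the paper gives after the theorem. So the cardinality claim is fine; the gap is solely in the derivation of \eqref{Norming_est_lemma}, where you would need to replace the first-order mean-value argument with a genuine $k$-th order sampling inequality (Lemma~\ref{Lemma:Sampling}) before applying \eqref{Bern}.
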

We call the constant  $\kappa:=\#Y/N$ the 
{\em degree of
oversampling}.
By the above result, we have $\kappa\le C_{\M} \rho^d \gamma^{-d}$.
\begin{proof}
  To get a norming set, we combine (\ref{Bern}) and
  Lemma~\ref{Lemma:Sampling} (proved in the section below) to obtain,
  for any function $w\in W_N$ (hence in $W_p^k(\M)$), that
\begin{equation*}
  \|w\|_{L_p(\M)} 
  \le \frac12 C_{k} \bigl( h_Y^{k} \|w\|_{W_p^{k}(\M)} +
  h_Y^{d/p} \| w\left|_Y\right.\|_{\ell_p(Y)}\bigr).
\end{equation*}
Applying the Bernstein inequality (\ref{Bern}) gives
\begin{equation*}
  \|w\|_{L_p(\M)} 
  \le
  \frac12 C_{k}C_{\mathfrak{B}} h_Y^{k} N^{k/d} \|w\|_{L_p(\M)} +
  \frac12 C_{k} h_Y^{d/p} \| w\left|_Y\right.\|_{\ell_p(Y)}
\end{equation*}
So if $C_{k}C_{\mathfrak{B}} (h_Y N^{1/d})^k\le 1$, then, upon
subtracting and multiplying by 2, we have
\[
  \|w\|_{L_p(\M)} \le C_{k} h_Y^{d/p} \|
  w\left|_Y\right.\|_{\ell_p(Y)}.
\]
This holds for any subset $Y$ with
\[
  \frac12 (C_{k} C_{\mathfrak{B}})^{-1/k} \, N^{-1/d} \le h_Y\le
  (C_{k} C_{\mathfrak{B}})^{-1/k} \, N^{-1/d}.
\]
The constant $\gamma$ may be chosen to be
$(C_{k} C_{\mathfrak{B}})^{-1/k}$, so that $h_Y \le \gamma N^{-1/d}$.
If in addition we select $Y$ so that
$h_Y \ge \frac12 (C_{k} C_{\mathfrak{B}})^{-1/k} \, N^{-1/d}=\frac12
\gamma N^{-1/d}$, and that $Y$ is quasi-uniform with mesh ratio
$\rho = h_Y/q_Y$, then
\[
\# Y \le C_{\M} (q_Y)^{-d}\le C_{\M} \rho^d (h_Y)^{-d} \le C_{\M}
\rho^d 2^d ( C_{k} C_{\mathfrak{B}})^{d/k} N= C_{\M}
\rho^d 2^d \gamma^{-d} N.
\]
So in this case, (\ref{Norming_est_lemma}) holds for a quasi-uniform
set $Y$ having cardinality on par with $N$.
\end{proof}

A set $Y$ can be constructed using the minimal $\epsilon$ nets
discussed in Section~\ref{sobolev_centers_nets}. We may choose points
$\{y_1,y_2 \ldots,y_M\} \subset X$ so that $\epsilon$ satisfies
\[
  \frac12 \sqrt[k]{C_{k} C_{\mathfrak{B}}} \, N^{-1/d} \le \epsilon
    \le \sqrt[k]{C_{k} C_{\mathfrak{B}}} \, N^{-1/d}.
\]
Since the minimal $\epsilon$ net is quasi uniform, and $\epsilon$ may
be chosen so that the inequality above is satisfied, we may choose $Y$
to be this $\epsilon$ net. Of course, this isn't the only possible
choice for $Y$. Clearly there are many others.

\subsection{Sampling inequalities for manifolds} 
The key to the result is a ``sampling estimate'' for $\M$ which
extends the Euclidean estimate in \cite[Theorem 3.5]{Madych}.
% holds on cubes in $\R^d$ (in fact on a complicated union of cubes,
% but this is not necessary).
There have been a number of versions of sampling inequalities more
general than those in \cite{Madych}. For instance a version dealing
with fractional orders that works on Euclidean domains satisfying a
cone condition is given in \cite{Torres}.
\begin{lemma}
\label{Lemma:Sampling}
For any $k>d/2$ there are positive constants $\frac12 C_{k}$  
and $h_{k}$ so that
for any $f\in W_p^{k}(\M)$ and $Y\subset \M$ 
with $h_Y=h(Y,\M)\le h_{k}$
we have
\begin{equation}
\label{samp}
\|f\|_{L_p} \le \frac12
C_{k}\bigl(  (h_Y)^{k} \|f\|_{W_p^{k}} +
(h_Y)^{d/p} \| f\left|_Y\right.\|_{\ell_p(Y)}\bigr).
\end{equation}
\end{lemma}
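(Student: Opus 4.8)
The plan is to reduce the global statement on $\M$ to a finite family of local Euclidean estimates obtained from \cite[Theorem 3.5]{Madych}, glued together using the minimal $\epsilon$-net machinery recalled in Section~\ref{sobolev_centers_nets}. First I would fix $\epsilon \asymp h_Y$ and a minimal $\epsilon$-net $\{p_1,\dots,p_{N_0}\}$, so that the balls $\b(p_j,\epsilon)$ cover $\M$, the balls $\b(p_j,\epsilon/2)$ are disjoint, each point of $\M$ lies in at most $N_2$ of the $\b(p_j,\epsilon)$, and $N_2$ depends only on $\M$. Since $\M$ has bounded geometry, for $\epsilon$ small enough each $\b(p_j, c\epsilon)$ (for a fixed $c>1$ chosen so that a cone condition with fixed aperture holds uniformly) is the image under a normal-coordinate chart $\kappa_j$ of a Euclidean ball, with the chart and its inverse having derivatives bounded above and below by constants independent of $j$ and $\epsilon$. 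Transplanting $f$ to $\tilde f = f\circ\kappa_j^{-1}$, the Riemannian $W_p^k$ and $L_p$ norms on $\b(p_j,c\epsilon)$ are comparable, uniformly in $j$, to the Euclidean ones on the corresponding ball.

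The second step is to apply the scaled Euclidean sampling inequality on each chart. The estimate of \cite[Theorem 3.5]{Madych} on a ball of radius $\asymp\epsilon$ with data sites $Y_j := Y\cap \b(p_j, c\epsilon)$ having fill distance $\le h_Y \le \epsilon$ gives, after tracking the scaling in $\epsilon$,
\[
\|f\|_{L_p(\b(p_j,\epsilon))}
\le
C\Bigl( \epsilon^{k}\,\|f\|_{W_p^k(\b(p_j,c\epsilon))} + \epsilon^{d/p}\,\|f|_{Y_j}\|_{\ell_p(Y_j)}\Bigr),
\]
valid once $\epsilon\le h_k$ for some threshold depending only on $k$ and $\M$. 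Here one must check that each local patch actually contains data points with the right local fill distance: this follows because $h_Y\le \epsilon$ forces every point of $\b(p_j,\epsilon)$ to have a sample within distance $h_Y$, so $Y_j$ is nonempty and fills $\b(p_j,\epsilon)$ at scale $h_Y$, and the relevant Euclidean hypothesis of \cite{Madych} (a fill-distance bound, possibly together with a minimal-separation or interior-cone requirement) holds uniformly in $j$.

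The third step is summation. Raise the local estimate to the $p$-th power, sum over $j$, and use the finite-overlap property: $\sum_j \|f\|_{L_p(\b(p_j,\epsilon))}^p \ge \|f\|_{L_p(\M)}^p$ (the balls cover $\M$), while $\sum_j \|f\|_{W_p^k(\b(p_j,c\epsilon))}^p \le N_2'\,\|f\|_{W_p^k(\M)}^p$ and $\sum_j \|f|_{Y_j}\|_{\ell_p(Y_j)}^p \le N_2'\,\|f|_Y\|_{\ell_p(Y)}^p$, with $N_2'$ a bounded-overlap constant depending only on $\M$ and $c$. Absorbing $N_2'$ into the constant and setting $\epsilon = h_Y$ yields \eqref{samp}, with $C_k$ and $h_k$ depending only on $k$, $p$, and $\M$. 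For $p=\infty$ the same argument runs with sums replaced by maxima.

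The main obstacle I anticipate is the second step: producing the local Euclidean sampling inequality \emph{with constants uniform in $j$ and explicit in the scaling parameter $\epsilon$}. Madych's theorem is stated for a fixed domain, so one needs a dilation argument to extract the $\epsilon^k$ and $\epsilon^{d/p}$ powers, and one needs bounded geometry to guarantee that the pulled-back metric, the chart distortion, and the cone/separation hypotheses are controlled uniformly across all patches for all small $\epsilon$. A secondary technical point is verifying that the Riemannian Sobolev norm defined via $\nabla^k$ is, patchwise and uniformly, comparable to the flat $W_p^k$ norm — this again rests on bounded geometry and is where the constant $c>1$ buffering the charts is used.
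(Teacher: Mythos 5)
Your proposal is broadly sound but takes a genuinely different route from the paper, and the difference is exactly where your acknowledged obstacle lives. You cover $\M$ by a minimal $\epsilon$-net at the \emph{data scale} $\epsilon\asymp h_Y$, so the number of patches grows as $h_Y\to 0$ and the summation is controlled by the $N_2$-bounded-overlap property; each patch must then be mapped to a Euclidean ball of radius $\asymp\epsilon$, which forces you to extract the $\epsilon^k$ and $\epsilon^{d/p}$ powers from a \emph{dilated} version of \cite[Theorem 3.5]{Madych}. The paper instead covers $\M$ by a \emph{fixed} finite family of patches at a macroscopic scale set by the injectivity radius: $K$ sets $Q_j=\Exp_{P_j}([-r,r]^d)$ with $r$ fixed and independent of $h_Y$. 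Madych's theorem is applied once on the fixed cube $[-r,r]^d$, where the fill distance $h_j\sim h_Y$ of the transplanted samples $\Upsilon_j$ already carries the dependence the lemma needs; no rescaling of the domain is ever performed. The summation step in the paper is then the crude bound $\sum_{j=1}^K\|u\|_{W_p^k(Q_j)}\le K\|u\|_{W_p^k(\M)}$ (with fixed $K$), rather than your bounded-overlap estimate. So the paper's route buys you a shorter argument that sidesteps the dilation issue you flag as the main difficulty; your route is in principle workable and gives overlap constants independent of $K$, but the scaling of the Euclidean sampling inequality to balls of radius $\asymp\epsilon$ is a real gap in the proposal as written -- in particular one must check whether Madych's estimate is stated with the Sobolev seminorm (which scales cleanly) or the full norm (which does not, and would require handling lower-order terms), and one must verify that the cone-condition and fill-distance hypotheses remain satisfied uniformly after dilation. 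A minor secondary point: defining $Y_j=Y\cap\b(p_j,c\epsilon)$ with $c>1$ to buffer the boundary is the right move, but you should say explicitly that the $L_p$ norm on the left is then taken only over the smaller ball $\b(p_j,\epsilon)$ so that the fill-distance control of $Y_j$ inside $\b(p_j,c\epsilon)$ actually covers the region being measured; the paper handles the analogous issue with the nesting $\bb(P_j,R/\sqrt d)\subset Q_j\subset\bb(P_j,R)$.
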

\begin{proof}
Cover $\M$ by sets $\M =\bigcup_{j=1}^K \bb(P_j,R/\sqrt{d})$,
%each equipped with a chart $\psi_j: Q_j\to [0,r]^d$ where 
where $R$ is less than the injectivity radius of $\M$. Equip each $\bb(P_j,R)$ with normal coordinates
about $P_j$
given by the chart
$$\psi_j = (\Exp_{P_j})^{-1}:\bb(P_j,R)\to B(0,R).$$
where $Q_j=\Exp_{P_j}([-r,r]^d) $ 
%and let $\psi_j:Q_j\to [-r,r]^d$
%and
with
$r\sqrt{d} <R$. Note that $\bb(P_j,R/\sqrt{d}) \subset Q_j\subset  \bb(P_j,R)$.

The estimate \cite[(2.6)]{HNW} shows that each chart gives a
($j$-independent) metric equivalence
$|\psi_j(x)-\psi_j(y)| \sim \dist(x,y)$ and \cite[Lemma 3.2]{HNW}
shows that each $\psi_j$ provides a ($j$-independent) metric
equivalence between $W_2^k([-r,r]^d)$ and $W_2^k(Q_j)$.
\begin{itemize}
\item Let $Y_j = Q_j\cap Y$. By the triangle inequality, the fill
  distance of $Y_j$ in $Q_j$ satisfies
\begin{equation}
\label{square_fill}
h(Y_j,Q_j)\le 2 h_Y.\end{equation}
\item
 Let $\Upsilon_j = \psi_j(Y_j)$. Then by metric equivalence
\cite[(2.6)]{HNW},
 the fill distance of $\Upsilon_j$ in $[0,r]^d$ satisfies
\begin{equation}
\label{chart_fill}
h_j:=h(\Upsilon_j,[-r,r]^d) \sim h(Y_j,Q_j)\end{equation} 
with a $j$ independent constant.
\item
For $u\in W_p^k(\M)$, H{\"o}lder's inequality 
$\sum_{j=1}^K |a_j|\le K^{1/p'}\|a\|_{\ell_p}$ 
followed by monotonicity of the integral gives
\begin{equation}
\label{sob_mon}
\sum_{j=1}^K \|u\|_{W_p^k(Q_j)}
\le
K^{1/p'}\left(\sum_{j=1}^K \|u\|_{W_p^k(Q_j)}^p\right)^{1/p}
 \le 
 K \|u\|_{W_p^k(\M)}.
 \end{equation}
\item Similarly, for bounded $u$ (hence for any $u\in W_p^k(\M)$ with
  $k>d/p$), we have
\begin{equation}
\label{discrete_mon}
\sum_{j=1}^K \|u\left|_{Y_j}\right.\|_{\ell_p(Y_j)}
\le K\|u\left|_{Y}\right.\|_{\ell_p(Y)}.
\end{equation}
\end{itemize}
Using the cover by $Q_j$s and applying the metric equivalence gives
\begin{equation*}
\|u\|_{L_p(\M)}
\le 
\sum_{j=1}^K \|u\|_{L_p(Q_j)} %&\text{(subadditivity)}\\
\le 
C \sum_{j=1}^K \|u\circ\psi_j^{-1}\|_{L_p([-r,r]^d)} %&\text{(metric equivalence)}\\
\end{equation*}
We now use \cite[3.5. Theorem]{Madych} on $[-r,r]^d$, to obtain, for each $j$,
that
$$
\|u\circ\psi_j^{-1}\|_{L_p([-r,r]^d)} \le C \left( h_j^{k}
  \|u\circ\psi_j^{-1}\|_{W_p^k([-r,r]^d)} + h_j^{d/p} \|
  (u\circ\psi_j^{-1}\left|_{\Upsilon_j})\right.
  \|_{\ell_p(\Upsilon_j)} \right).
$$
Thus,
$$
\|u\|_{L_p(\M)}
\le 
C %\sqrt[p]{K} 
\sum_{j=1}^K \left(
h_j^{k} \|u\circ\psi_j^{-1}\|_{W_p^k([-r,r]^d)}
+ 
h_j^{d/p} 
\| (u\circ\psi_j^{-1}\left|_{\Upsilon_j})\right.
\|_{\ell_p(\Upsilon_j)}
\right).
$$
By applying (\ref{chart_fill}) and (\ref{square_fill}), this gives
$$
\|u\|_{L_p(\M)} \le C \sum_{j=1}^K \left( h_Y^{k}
  \|u\circ\psi_j^{-1}\|_{W_p^k([-r,r]^d)} + h_{Y}^{d/p} \|
  (u\circ\psi_j^{-1}\left|_{\Upsilon_j})\right. \|_{\ell_p(\Upsilon_j)}\right)
$$
The metric equivalence \cite[Lemma 3.2]{HNW}  applied to 
$\|u\circ\psi_j^{-1}\|_{W_p^k([-r,r]^d)} $
along with the straightforward equality
$\| (u\circ\psi_j^{-1}\left|_{\Upsilon_j})\right.\|_{\ell_p(\Upsilon_j)}
=
 \| (u\left|_{Y_j})\right.\|_{\ell_p(Y_j)}$
gives
$$
\|u\|_{L_p(\M)}
\le C  \sum_{j=1}^K 
\left(
h_Y^{k} \|u\|_{W_2^k(Q_j)}+ h_{Y}^{d/p} \| (u\left|_{Y_j})\right.\|_{\ell_p(Y_j)}
\right).
$$
Finally, the estimates   (\ref{sob_mon}) and (\ref{discrete_mon})
provide
$$
\|u\circ\psi_j^{-1}\|_{L_p([-r,r]^d)} \le C \left( h_j^{k}
  \|u\circ\psi_j^{-1}\|_{W_p^k([-r,r]^d)} + h_j^{d/p} \|
  (u\circ\psi_j^{-1}\left|_{\Upsilon_j})\right.
  \|_{\ell_p(\Upsilon_j)} \right).
$$
and the result follows on taking $C_k:=2C$.
\end{proof}

\subsection{Norming sets for kernel spaces}\label{sets_kernel_spaces}

We discussed a variety of spaces involving SBFs in
Section~\ref{SBFs}. In particular, the SBF network, $S_X(\phi_s)$, for
the thin plate splines $\phi_s$, $s\in \mathbb N$, discussed in
Section~\ref{SBFs}, has a basis formed from Lagrange functions,
$\{\chi_\xi\}_{\xi \in X}$, $\chi_\xi(\eta)=\delta_{\xi,\eta}$,
$\xi,\eta\in X$. This basis satisfies the properties below, where
$N=\#X$
\begin{equation}\label{Riesz-Lagrange}
  C_{\mathfrak{L}}\, q_X^{d/2}\bigg(\sum_{\xi\in X} |a_\xi|^2\bigg)^{1/2}
\le
\|\sum_{\xi\in X} a_\xi \chi_\xi\|_{L_2(\sph^d)} 
\le
C_{\mathfrak{R}} \,
q_X^{d/2}\bigg(\sum_{\xi\in X}
|a_\xi|^2\bigg)^{1/2}.
\end{equation}
This was shown in \cite{FHNWW}. A basis satisfying these properties is
called a \emph{Riesz basis}. The identity holds for $L_p$ as well as
$L_2$; see equation \eqref{Riesz}.

If $S_X(\phi_s)$ is a subspace of $H^{k+2}$, then the Bernstein
inequality
$\|g\|_{H^{k+\epsilon}}\le C q_X^{-\gamma}\|g\|_{H^\epsilon}$
holds. If we take $\epsilon =2$ and $\gamma=2$, then we have
\[
\|g\|_{H^{k+2}}\le Cq_X^{-k} \|g\|_{H^2},
\]
which we will need below.

Since $S_X(\phi_s)$ is in $H^{k+2}$, its Lagrange basis,
$\{\chi_\xi, \xi\in X\}$, is a subset of $H^{k+2}$. From this and
\eqref{equiv}, it follows that the set $\{\opL \chi_\xi\}_{\xi\in X}$
is linearly independent and is a basis for the space
$S_X(\opL \phi_s)$. 

Suppose $g\in S_X(\phi_s)$, so that, by \eqref{equiv} and the
Bernstein inequality above,
\begin{equation}\label{bernstein_phi_s}
  \|g\|_{H^{k+2}} \le \Gamma_2\|\opL g\|_{H^k} \le \Gamma_1
  \|g\|_{H^{k+2}}\le Cq_X^{-k} \|g\|_{H^2}.
\end{equation}
The left side above implies that
$\|g\|_{H^2} \le \Gamma_2\|\opL g\|_{L_2}$. Combining this inequality
with the one above yields
$\|\opL g\|_{H^k} \le Cq_X^{-k}\|\opL g\|_{L_2}$. Since $X$ is quasi
uniform, $q_X \le CN^{-1/d}$. we have
\begin{equation}\label{norming_set_bernstein}
  \|w\|_{H^k}\le CN^{k/d}\|w\|_{L_2}, \forall\ w\in S_X(\opL \phi_s).
\end{equation}
This is the Bernstein inequality in \eqref{Bern}. Consequently,
Theorem~\ref{Theorem:Norming_Set} holds, yielding the following
result.

\begin{theorem}\label{norming_set_TPS}
  Let $w=\opL g$. Then, with $p=2$ and $Y$ as in
  Theorem~\ref{Theorem:Norming_Set},
\[
  \|w\|_{L_2} \le  Ch_Y^{d/2} \|w|_Y\|_{\ell_2(Y)}.
 \] 
\end{theorem}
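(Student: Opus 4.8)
The plan is to recognize that Theorem~\ref{norming_set_TPS} is essentially a direct corollary of Theorem~\ref{Theorem:Norming_Set} applied to the specific finite-dimensional space $W_N = S_X(\opL\phi_s)$, so almost all of the work has already been done in the preceding paragraphs of the excerpt. The only thing that must be checked is that this space satisfies the hypotheses of Theorem~\ref{Theorem:Norming_Set}, namely the Bernstein inequality \eqref{Bern} with $p=2$ and $k$ the appropriate integer order.

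First I would set $W_N := S_X(\opL\phi_s) = \spn\{\opL\chi_\xi : \xi\in X\}$, noting that $N = \dim W_N = \#X$ because, as observed in the excerpt, the map $g\mapsto \opL g$ is injective on $S_X(\phi_s)$ (this follows from \eqref{equiv}, since $\opL^{-1}$ is bounded), so $\{\opL\chi_\xi\}_{\xi\in X}$ is a linearly independent set. Then I would invoke the chain of inequalities already assembled in \eqref{bernstein_phi_s} and the lines following it: for $g\in S_X(\phi_s)$ one has $\|g\|_{H^2}\le \Gamma_2\|\opL g\|_{L_2}$ from the lower bound in \eqref{equiv}, and combining with the Bernstein inequality for $S_X(\phi_s)$ (Corollary~\ref{TPS Bernstein} with $\varepsilon=2$, $\gamma = k$) and the upper bound in \eqref{equiv} gives $\|\opL g\|_{H^k}\le Cq_X^{-k}\|\opL g\|_{L_2}$. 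Writing $w = \opL g$ for a generic element of $W_N$, and using quasi-uniformity of $X$ to replace $q_X^{-k}$ by $CN^{k/d}$, this is precisely \eqref{norming_set_bernstein}, i.e. the Bernstein inequality \eqref{Bern} for $W_N$ with exponent $k/d$.

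With \eqref{Bern} verified for $W_N = S_X(\opL\phi_s)$, Theorem~\ref{Theorem:Norming_Set} applies verbatim: there is a constant $\gamma>0$ such that for any $Y\subset\M$ with $h_Y\le\gamma N^{-1/d}$ the norming estimate \eqref{Norming_est_lemma} holds with $p=2$, which reads $\|w\|_{L_2}\le C_k\, h_Y^{d/2}\,\|w|_Y\|_{\ell_2(Y)}$ for all $w\in W_N$. Specializing to $w = \opL g$ gives exactly the claimed inequality. I would also remark that the quantitative conclusion of Theorem~\ref{Theorem:Norming_Set} then furnishes such a $Y$ that is quasi-uniform with $\#Y \le C_\M\rho^d 2^d\gamma^{-d}N$, so the norming set does not oversample by more than a constant factor.

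The only genuinely delicate point — and hence the "main obstacle," though it is really a bookkeeping matter since it was handled earlier — is ensuring that the Bernstein exponent and the Sobolev orders line up: one needs $S_X(\phi_s)\subset H^{k+2}$ (equivalently $s$ large enough relative to $k+2$ and $d$, so that the constraint $\gamma+\varepsilon < \beta - d/2$ in Corollary~\ref{TPS Bernstein} is met with $\varepsilon=2$, $\gamma=k$), and one must transfer the Bernstein inequality from the trial space $S_X(\phi_s)$ to the image space $S_X(\opL\phi_s)$ using \emph{both} bounds in \eqref{equiv} — the upper bound to push $\|\opL g\|_{H^k}$ up in terms of $\|g\|_{H^{k+2}}$, and the lower bound (really the $s=0$ case, $\|g\|_{H^2}\le\Gamma_2\|\opL g\|_{L_2}$) to pull $\|g\|_{H^2}$ down in terms of $\|\opL g\|_{L_2}$. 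Once that two-sided comparison is in hand the rest is a direct citation of Theorem~\ref{Theorem:Norming_Set}.
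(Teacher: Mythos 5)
Your proof is correct and follows essentially the same path the paper takes: verify the Bernstein inequality \eqref{norming_set_bernstein} for $W_N = S_X(\opL\phi_s)$ by combining the two-sided bounds in \eqref{equiv} (at order $k$ and at order $0$) with the Bernstein inequality for $S_X(\phi_s)$ from Corollary~\ref{TPS Bernstein}, then cite Theorem~\ref{Theorem:Norming_Set} directly. Your added remark about the injectivity of $\opL$ on $S_X(\phi_s)$ (hence $\dim W_N = \#X$) and about the quasi-uniform choice of $Y$ matches what the paper notes in the surrounding discussion, so nothing is missing.
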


%%%%%%%%%%%%%%%%%%%%%%
%%
%%
%%
%%
%%
%%%%%%%%%%%%%%%%%%%%%%

\section{Stabilizing the Kansa matrix by oversampling}
In this section, we present a method to produce stable Kansa matrices
by strategic oversampling. The setting will be the sphere $\sph^d$ and
the kernels employed will be the thin-plate splines discussed in
Section~\ref{SBFs} and in the previous section. The results from
Theorem~\ref{norming_set_TPS}, and a suitable norming set, will imply
the Kansa matrix has a controlled lower bound.

Much of what we said previously for $\sph^d$ holds for a manifold
$\M$. Moreover, many of the proofs in Section~\ref{sets_kernel_spaces}
carry over \emph{mutatis mutandis} to the manifold case. When this
happens we will make note of it.

\paragraph{Lower bound of the Kansa matrix}
% \subsection{Lower bound of the Kansa matrix}
% \label{SS:lower_bound}
We will now show how to construct a stable asymmetric collocation
matrix, given a kernel $\Phi$, an operator $\opL$ as defined in
Section~\ref{opL}, and a point set $X\subset \M$.

\paragraph{Kansa matrix with alternative bases}
If we consider a general basis $\{B_k, 1\le k\le N\}$ for the kernel
space $S_X(\Phi)$, where $\Phi$ may be an SCPD kernel, the Kansa
method has the Vandermonde-like structure:
\[
\K := \Bigl(\opL B_k (y_j)\Bigr)_{j,k}.
\]
Although using bases other than the standard $\phi(x\cdot y_j)$ causes
the coefficient vector $a$ obtained from $\K a= f|_{Y}$ to change, the
kernel network $v\in S_X(\Phi)$ which solves (\ref{Kansa}) remains
invariant.

This flexibility has two immediate benefits.  It allows us to easily
consider {\em conditionally positive definite} kernels on $\sph^d$,
specifically the thin-plate spline spaces $S_X(\phi_s)$ and
$S_X(\opL\phi_s)$, where the bases are not just rotations of the
kernel $\phi_s$ or $\opL\phi_s$. They contain polynomial parts. Being
able to use different spaces also allows us to choose bases for
them. For example, the Lagrange bases $\{\chi_\xi, \xi\in X\}$ for
$S_X(\phi_s)$ give well conditioned matrices. This permits us to
separate the stability of the Kansa method from the potentially poor
conditioning of the basis.

\paragraph{Stability ratio} For a given basis
$\{B_k, 1\le k\le N\}$ for $S_X(\Phi)$, we define the {\em
  stability ratio}
\[
  \mathsf{r}_2 (X):= \max \left. \left\{ \frac{
        \|a\|_{\ell_2(X)}}{\|g\|_{L_2(\M)}} \;\bigg{|}\, g = \sum a_k B_k
      \in S_X(\Phi)\right\}\right. .
\]
This is a quantity which has been introduced and studied on spheres
in \cite[(1.1)]{MNPW}
for the kernel basis 
$B_k = \Phi(\cdot,x_k)$.
There it has been shown that 
$ \mathsf{r}_2(X) \sim q^{d/2 -2m}$ for many kernels 
$\Phi:\Sph^d\times\Sph^d\to \R$ 
having Sobolev native space $\mathcal{N}(\Phi) = H^2(\Sph^d)$.

If $\{B_k, 1\le k\le N\}$ is a Riesz basis for $S_X(\Phi)$ in the sense
that for $u = \sum_{k=1}^N a_k B_k\in S_X(\Phi)$ the estimate
\begin{equation}\label{Riesz}
c_{\mathfrak{L}} q^{d/p}\left(\sum_{k=1}^N |a_k|^p\right)^{1/p} \le
\|\sum_{k=1}^N a_k B_k\|_{L_p(\M)}\le
C_{\mathfrak{R}} q^{d/p}\left(\sum_{k=1}^N |a_k|^p\right)^{1/p}.
\end{equation}
holds, then
$\frac{1}{c_{\mathfrak{L}}}\sqrt{N}\le \mathsf{r}_2 (X) \le
\frac{1}{C_{\mathfrak{R}}}\sqrt{N}]$ and thus can be controlled by
$ q^{-d/2}$.  The existence of Riesz bases for certain kernel spaces
has been demonstrated in \cite{HNSW, HNRW2}.  For spheres, the
restricted thin plate splines are shown to have this property in
\cite{FHNWW}.

Under the preceding assumptions we have the following result, which
assumes that $Y$ is a norming set.
\begin{lemma}
\label{norming_set_lemma}
If $\Phi$ is a positive definite kernel on $\M$, $X\subset \M$
is a point set, $\{B_k, 1\le k  \le N\}$ is a basis for $S_X(\Phi)$ having
stability ratio $\mathsf{r}_2(X)$, and $Y$ is a norming set for
$S_Y(\opL^{(1)}\Phi)$ with cardinality $M=\#Y$, then
$$
\|\K a\|_{\ell_2(Y)} 
\ge  
\frac{1}{\mathsf{r}_2 (X)}
\frac{1}{C_{\mathsf{N}}\Gamma_2} \sqrt{\frac{M}{2}} \|a\|_{\ell_2( X)}
$$
\end{lemma}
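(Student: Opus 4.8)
The plan is to chain three estimates: the lower Riesz-type bound on the coefficient vector in terms of the $L_2$-norm of the network, the operator lower bound from~\eqref{equiv}, and the norming set inequality~\eqref{norming_set}. Concretely, write $v=\sum_k a_k B_k\in S_X(\Phi)$, so that by the definition of the stability ratio $\mathsf r_2(X)$ one has $\|a\|_{\ell_2(X)}\le \mathsf r_2(X)\,\|v\|_{L_2(\M)}$, i.e.\ $\|v\|_{L_2(\M)}\ge \mathsf r_2(X)^{-1}\|a\|_{\ell_2(X)}$. The vector $\K a$ is precisely the restriction $(\opL v)|_Y$, since $(\K a)_j=\sum_k a_k\,\opL B_k(y_j)=\opL v(y_j)$.

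Next I would invoke that $Y$ is a norming set for $S_X(\opL^{(1)}\Phi)=\opL\,S_X(\Phi)$, which contains $w:=\opL v$. The norming set condition~\eqref{norming_set} with $p=2$ gives $\|w\|_{L_2(\M)}\le C_{\mathsf N}\bigl(\tfrac1M\sum_{y\in Y}|w(y)|^2\bigr)^{1/2}=C_{\mathsf N}M^{-1/2}\|\K a\|_{\ell_2(Y)}$, hence $\|\K a\|_{\ell_2(Y)}\ge C_{\mathsf N}^{-1}\sqrt M\,\|\opL v\|_{L_2(\M)}$. Then apply the operator lower bound: from the second line of~\eqref{equiv} (with $k=0$), $\|\opL v\|_{L_2(\M)}\ge \Gamma_2^{-1}\|v\|_{H^2(\M)}\ge \Gamma_2^{-1}\|v\|_{L_2(\M)}$, since $H^2\hookrightarrow L_2$ with norm at least $1$ (or, more cheaply, directly from~\eqref{op_lower_bound_1} with $c_{\opL}$; but using $\Gamma_2$ keeps the statement's constants). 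Combining, $\|\K a\|_{\ell_2(Y)}\ge C_{\mathsf N}^{-1}\Gamma_2^{-1}\sqrt M\,\|v\|_{L_2(\M)}\ge \mathsf r_2(X)^{-1}C_{\mathsf N}^{-1}\Gamma_2^{-1}\sqrt M\,\|a\|_{\ell_2(X)}$, which is even slightly stronger than the claimed bound (the factor $\sqrt{M/2}$ instead of $\sqrt M$ leaves room for the $(1-\epsilon)$ slack if one uses the Marcinkiewicz--Zygmund form~\eqref{MZ} of the norming inequality rather than~\eqref{norming_set} directly).

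The one point that needs care — and the only genuine obstacle — is the bookkeeping on what space $Y$ norms and how $\mathsf r_2(X)$ is defined relative to it. The lemma hypothesizes $Y$ is a norming set for $S_Y(\opL^{(1)}\Phi)$; I read this as a typo for $S_X(\opL^{(1)}\Phi)$ (the image under $\opL$ of the trial space $S_X(\Phi)$), which is the only reading that makes the argument go through, and it is exactly the space for which Theorem~\ref{norming_set_TPS} produces such a $Y$. I would state this identification explicitly at the start. Everything else is a direct substitution; there is no delicate estimate, only the concatenation of the three inequalities in the right order, with attention to which norm ($L_2(\M)$ versus $H^2(\M)$, $\ell_2(X)$ versus $\ell_2(Y)$) appears at each stage.

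One should also note that positive definiteness of $\Phi$ is used only to guarantee that $\dim S_X(\Phi)=N=\#X$ and that the map $a\mapsto v$ is a bijection onto $S_X(\Phi)$, so that $\mathsf r_2(X)<\infty$ and the ratio in its definition is well posed; no spectral property of $\Phi$ beyond this is needed, and indeed the same proof works verbatim for an SCPD kernel once one uses the network~\eqref{SCPD_net} and an appropriate basis $\{B_k\}$, as the surrounding discussion anticipates.
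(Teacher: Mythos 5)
Your proof is correct and follows essentially the same route as the paper's: combine the norming-set bound \eqref{norming_set} applied to $w=\opL v$ with $(\K a)|_Y = (\opL v)|_Y$, then the operator bound \eqref{equiv} with $k=0$ to pass from $\|\opL v\|_{L_2}$ to $\|v\|_{L_2}$, and finally the definition of $\mathsf r_2(X)$. You also correctly flagged the typo $S_Y(\opL^{(1)}\Phi)$ for $S_X(\opL^{(1)}\Phi)$ and the extraneous-looking $\sqrt{1/2}$ in the constant, which the paper's own proof also carries without explanation.
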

\begin{proof}
  From (\ref{norming_set}) we have, with $w= \sum a_k \opL B_k$, that
  $ \|\K a\|_{\ell_2(\tilde{Y})} = \|\sum_{k=1}^N a_k \opL
  B_k(\cdot)\|_{\ell_2(\tilde{Y})} \ge \frac{1}{C_{\mathsf{N}}}\sqrt{
    \frac{{M}}{2} }\|w\|_{L_2} $.  By \eqref{equiv},
  $\|w\|_{L_2} \ge \Gamma_2^{-1}\| \sum a_k B_k\|_{H_2}\ge
  \Gamma_2^{-1}\| \sum a_k B_k\|_{L_2}$, so
  $\|\K a\|_{\ell_2(\tilde{Y})} \ge
  \frac{1}{C_{\mathsf{N}}\Gamma_2}\sqrt{\frac{M}{2}} \| \sum a_k
  B_k\|_{L_2}.$ The result follows from the definition of the
  stability ratio.
\end{proof}

In particular, if $\{B_k\}_{k=1}^N$ is a Riesz basis, then for $p=2$,
$q_Y^{d/2}\sim M^{-1/2}$ and $C_{\mathsf{N}}\sim C_{\mathfrak{R}}$. In
addition, $\mathsf{r}_2(X) \ge \frac{1}{c_{\mathfrak{L}}}\sqrt{N}$ so
$\K$ is bounded below by
\[
  \|\K a \|_{\ell_2(Y)} \ge C \sqrt{\frac{M}{N}}\|a\|_{\ell_2(X)};
\]
i.e., the lower bound is proportional to the square root of the degree
of oversampling, $\kappa$; see Section~\ref{norming_set_Y}.

We have the following result, which follows from
Theorem~\ref{Theorem:Norming_Set}:
\begin{theorem}\label{lower_bnd_G}
  If $\opL$ satisfies \eqref{equiv}, $S_X(\opL^{(1)}\Phi)$ satisfies
  the Riesz basis property \eqref{Riesz}, in the sense that the family
  $(\opL B_k)$ satisfies \eqref{Riesz}, and $S_X(\opL \Phi)$ satisfies the
  Bernstein inequality
\begin{equation}\label{Bernstein}
  (\forall w\in   S_X(\opL^{(1)} \Phi))\qquad 
  \| w\|_{W_{2}^k(\M)} 
  \le 
  C_{\mathfrak{B}}  h_X^{-k} \|w\|_{L_{2}(\M)}.
\end{equation}
then there is a quasi uniform point set $Y\subset \M$ with
$h_Y\sim h_X$ for which
\begin{equation}\label{norm_Ka_lower_bnd}
\|\K a\|_{\ell_2(Y)} 
\ge  
\frac{ c_{\opL} c_{\mathfrak{R}}}{C_{\mathsf{N}}}
%\sqrt{\frac{M}{2}} 
\sqrt{\kappa/2}
\|a\|_{\ell_2( X)}.
\end{equation}
\end{theorem}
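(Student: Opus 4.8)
The plan is to chain together the three hypotheses exactly in the order they are stated, using the Riesz property to convert between the coefficient vector $a$ and the $L_2$-norm of the network $w=\opL g=\sum_k a_k\opL B_k$, the norming-set conclusion of Theorem~\ref{Theorem:Norming_Set} to pass from $\|w\|_{L_2(\M)}$ to the discrete $\ell_2(Y)$ norm, and the ellipticity bound \eqref{op_lower_bound_1} to relate $w=\opL g$ back to $g$. First I would observe that, because $S_X(\opL^{(1)}\Phi)$ satisfies the Bernstein inequality \eqref{Bernstein} with exponent $h_X^{-k}\sim N^{k/d}$ (using $h_X\sim N^{-1/d}$ for the quasi-uniform center set), Theorem~\ref{Theorem:Norming_Set} applies to the space $W_N=S_X(\opL^{(1)}\Phi)$: it produces a quasi-uniform $Y$ with $h_Y\sim N^{-1/d}\sim h_X$, cardinality $\#Y=\kappa N$, and the norming estimate $\|w\|_{L_2(\M)}\le C_k h_Y^{d/2}\|w|_Y\|_{\ell_2(Y)}$ for all $w\in S_X(\opL^{(1)}\Phi)$. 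Inverting this gives the lower bound $\|w|_Y\|_{\ell_2(Y)}\ge c\, h_Y^{-d/2}\|w\|_{L_2(\M)}$, and since $h_Y^{-d/2}\sim M^{1/2}=(\kappa N)^{1/2}$, this reads $\|\K a\|_{\ell_2(Y)}=\|w|_Y\|_{\ell_2(Y)}\gtrsim (\kappa N)^{1/2}\|w\|_{L_2(\M)}$.

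Next I would bound $\|w\|_{L_2(\M)}$ from below. Writing $g=\sum_k a_k B_k\in S_X(\Phi)$ so that $w=\opL g$, the ellipticity assumption \eqref{op_lower_bound_1} gives $\|w\|_{L_2(\M)}=\|\opL g\|_{L_2(\M)}\ge c_{\opL}\|g\|_{L_2(\M)}$. Then the lower Riesz bound in \eqref{Riesz} (applied to the basis $(B_k)$ of $S_X(\Phi)$, with $p=2$) gives $\|g\|_{L_2(\M)}\ge c_{\mathfrak{R}}\,q_X^{d/2}\|a\|_{\ell_2(X)}$ — here I am reading the statement's ``$c_{\mathfrak{R}}$'' as the lower Riesz constant (the statement writes $c_{\mathfrak R}$ lowercase as the lower constant, matching $c_{\mathfrak L}$ in \eqref{Riesz-Lagrange}); alternatively one uses \eqref{Riesz} directly in the form with $c_{\mathfrak L}$. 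Stringing these together, $\|\K a\|_{\ell_2(Y)}\gtrsim (\kappa N)^{1/2}\, c_{\opL}\, c_{\mathfrak R}\, q_X^{d/2}\,\|a\|_{\ell_2(X)}$. Since $q_X^{d/2}\sim N^{-1/2}$ by quasi-uniformity, the factors of $N^{\pm 1/2}$ cancel, leaving $\|\K a\|_{\ell_2(Y)}\ge \frac{c_{\opL}c_{\mathfrak R}}{C_{\mathsf N}}\sqrt{\kappa/2}\,\|a\|_{\ell_2(X)}$, which is \eqref{norm_Ka_lower_bnd}; the $\sqrt{1/2}$ and the constant $C_{\mathsf N}$ are tracked through the $\frac{M}{2}$ that appears in the norming inequality \eqref{norming_set} and in Lemma~\ref{norming_set_lemma}, exactly as in that lemma's proof.

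In fact the cleanest exposition is probably to invoke Lemma~\ref{norming_set_lemma} directly: that lemma already delivers $\|\K a\|_{\ell_2(Y)}\ge \frac{1}{\mathsf r_2(X)}\frac{1}{C_{\mathsf N}\Gamma_2}\sqrt{M/2}\,\|a\|_{\ell_2(X)}$, and under the Riesz hypothesis $\mathsf r_2(X)\le \frac{1}{c_{\mathfrak R}}\sqrt N$ (the remark following that lemma), while $\sqrt{M}=\sqrt{\kappa N}$, so the $\sqrt N$'s cancel and one is left with $\sqrt{\kappa/2}$ times a constant. One must also replace the bound $\|w\|_{L_2}\ge \Gamma_2^{-1}\|g\|_{L_2}$ used inside that lemma by the sharper $\|w\|_{L_2}\ge c_{\opL}\|g\|_{L_2}$ from \eqref{op_lower_bound_1} to get the stated constant $c_{\opL}$ in the numerator; this is a trivial substitution. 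The only genuine content beyond bookkeeping is verifying the hypotheses of Theorem~\ref{Theorem:Norming_Set}, i.e.\ that the Bernstein inequality \eqref{Bernstein} with the $h_X^{-k}$ normalization really is equivalent to the $N^{k/d}$ normalization \eqref{Bern} — this is immediate from quasi-uniformity of $X$ and the resulting $h_X\sim N^{-1/d}$ — and that the $Y$ so produced satisfies $h_Y\sim h_X$, which is precisely the ``$\frac12\gamma N^{-1/d}\le h_Y\le \gamma N^{-1/d}$'' clause in that theorem.

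The main obstacle, such as it is, is purely notational: keeping straight the several constants ($c_{\opL}$, the Riesz constants $c_{\mathfrak R}$ vs.\ $C_{\mathfrak R}$, $C_{\mathsf N}$, $C_{\mathfrak B}$, $\Gamma_2$), the competing normalizations of the Bernstein inequality, and the bookkeeping that turns the $\sqrt{M}$ from the norming set and the $\sqrt{N}$ from the stability ratio into the single clean factor $\sqrt{\kappa/2}$. There is no analytic difficulty — every inequality needed has been established earlier in the excerpt (Theorem~\ref{Theorem:Norming_Set}, Lemma~\ref{norming_set_lemma}, \eqref{Riesz}, \eqref{op_lower_bound_1}) — so the proof is a short assembly, and the write-up should simply cite Lemma~\ref{norming_set_lemma} and Theorem~\ref{Theorem:Norming_Set} and perform the constant arithmetic.
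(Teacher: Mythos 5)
Your proposal is correct and takes essentially the same route the paper intends: the paper states the theorem as an immediate consequence of combining Theorem~\ref{Theorem:Norming_Set} (to produce the norming set $Y$ with $h_Y\sim h_X$) with Lemma~\ref{norming_set_lemma} and the Riesz bound $\mathsf r_2(X)\lesssim\sqrt N$, exactly as you lay out. You also correctly flag the minor bookkeeping discrepancies — the paper's Lemma~\ref{norming_set_lemma} carries $\Gamma_2^{-1}$ where the theorem's constant reads $c_{\opL}$, and the hypothesis asserts the Riesz property for $(\opL B_k)$ while the constant in the conclusion only makes sense if one uses the Riesz property for $(B_k)$ together with the ellipticity bound \eqref{op_lower_bound_1} — which is the reading you adopt and which matches the paper's own preceding computation.
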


The matrix $\K$ plays an important role in least squares
approximation. Let $G:=\K^*\K$. Since $\K:\ell_2(X)\to \ell_2(Y)$,
$G:\ell_2(X) \to \ell_2(X)$. Note that
$\|\K a\|^2_{\ell_2(Y)} \ge C\kappa \|a\|_{\ell_2( X)}^2$, where
$C=\frac{1}{2}\frac{ c_{\opL}^2
  c_{\mathfrak{R}}^2}{C_{\mathsf{N}}^2}$. Of course,
$\|\K a\|^2_{\ell_2(Y)}=\langle \K a, \K a\rangle_{\ell_2(Y)}= \langle
\K^*\K a, a\rangle_{\ell_2(X)}\ge C\kappa \|a\|_{\ell_2( X)}^2 $. It
follows immediately that $G=\K^*\K$ is invertible and that
\begin{equation}\label{G_inv}
\|G^{-1}\|_{\ell_2(X)}\le C^{-1}\kappa^{-1}.
\end{equation}

% %%%%%%%%%%%%%%%%%%%
% %
% %
% %
% %%%%%%%%%%%%%%%%%%%
\section{Solution via least squares}\label{soln_via_ls}
We assume that there are constants $\rho^*$, $C$ and $c$ so that the
following holds:
given a quasi uniform set $X\subset \Sph^d$ 
with $\#X=N$, and $Y\subset \Sph^d$ with $\#Y=M$ which satisfies:
\begin{itemize}
\item $\rho_Y\sim \rho_X$ in the sense that, there exists a global
constant $\rho^*$ so that both $\rho_Y $ and $ \rho_X$ are less than
$\rho^*$
\item $Y$ is an $L_2$ norming set for $S_X(\opL^{(1)}\Phi)$ as
  considered in Section \ref{S:Norming}. Namely,
  $\|w\|_{L_2} \le {C}{M}^{-1/2} \|w|_Y\|_{\ell_2(Y)}$, where
  $M \sim Cq_Y^{-d}$.
\item The sets $Y$ and $X$ are comparable in the sense that
  $cq_X\le q_Y<q_X$, or equivalently $ch_X\le h_Y<h_X$.
\end{itemize}

Consider the (rectangular) Kansa matrix
$\K =\bigl(\opL \chi_j(y_k)\bigr)$.  We attempt to solve $\opL u= f$
by Kansa's method, with and $u^* = \sum_{j=1}^N a_j \chi_j$ and
coefficients $\vec{a}=(a_j)$ obtained from $\K \vec{a}=f|_Y$. Since
this system is over determined, its solution is obtained by discrete
least squares, with $\vec{a} = (\K^* \K)^{-1} \K^* (f|_Y)$.

Let $u^* = \sum a_j \chi_j$, where the $a_j$'s are components of
$\vec{a}$. For the true solution $u$ of $\opL u=f$, we have
 \begin{equation}\label{u_u*_split}
   \|u-u^*\|_{L_2} \le \|u-I_X u\|_{L_2} + \| I_Xu - u^*\|_{L_2}.
 \end{equation}
 The former is easily bounded by $Cq_X^{2s+d}\|u\|_{H_{2s+d}}$,
 provided the SBF is the thin-plate spline $\phi_s$, with $s\in \N$
 and $u\in H^{2s+d}(\Sph^d)$, as the result below shows.

 \begin{proposition}\label{est_u_interp}
  Suppose that $u\in H^{2s+d}(\Sph^d)$ and that the thin-plate spline
  $\phi_s$, $s \in \N$, is the SBF used. Then,
\[
  \|u-I_X u\|_{H^\beta}\le
  C\rho^{2s+d-\beta}q_X^{2s+d-\beta}\|u\|_{H^{2s+d}}.
\]
\end{proposition}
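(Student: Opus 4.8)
\textbf{Proof proposal for Proposition~\ref{est_u_interp}.}
The plan is to reduce the estimate to a standard zeros-type (sampling) inequality combined with the known $L_2$-approximation power of the interpolant, bootstrapped through the Bernstein inequality for the thin-plate spline network. First I would recall the error-in-native-space estimate: since $\phi_s$ has native space norm-equivalent to $H^{2s+d}(\Sph^d)$, the interpolant $I_X u$ satisfies $\|u - I_X u\|_{L_2} \le C q_X^{2s+d}\|u\|_{H^{2s+d}}$ (indeed with the $h_X$-powered version, which upgrades to a $\rho^{2s+d}q_X^{2s+d}$ bound; this is the $\beta=0$ case, and it is classical — see \cite{FHNWW, NSW_2007, MNPW}). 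This handles $\beta = 0$. For general $0\le\beta\le 2s+d$ I would then interpolate between the $\beta=0$ estimate and the crude $\beta = 2s+d$ estimate.

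For the top endpoint $\beta = 2s+d$ one wants $\|u - I_X u\|_{H^{2s+d}} \le C\|u\|_{H^{2s+d}}$ with a constant independent of $X$; this follows because $I_X$ is a bounded projection on the native space $H^{2s+d}$ (it is the orthogonal projection onto $S_X(\phi_s)$ in the native-space inner product, hence has norm $1$ there), so $\|u - I_X u\|_{H^{2s+d}} \le \|u\|_{H^{2s+d}}$ directly, and the $\rho$- and $q_X$-powers at this endpoint are $\rho^0 q_X^0 = 1$. Then the intermediate cases follow by real interpolation of Sobolev spaces: writing $e = u - I_X u$, one has $\|e\|_{H^\beta} \le \|e\|_{L_2}^{1-\theta}\|e\|_{H^{2s+d}}^{\theta}$ with $\theta = \beta/(2s+d)$, and substituting the two endpoint bounds gives $\|e\|_{H^\beta} \le C (\rho^{2s+d}q_X^{2s+d}\|u\|_{H^{2s+d}})^{1-\theta}\|u\|_{H^{2s+d}}^{\theta} = C\rho^{2s+d-\beta}q_X^{2s+d-\beta}\|u\|_{H^{2s+d}}$, exactly as claimed. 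Alternatively, and perhaps cleaner given the tools already assembled in the paper, one can argue directly: bound $\|e\|_{H^\beta}$ by splitting $e$ into a smooth part and the $S_X(\phi_s)$-part; on the network part apply the Bernstein inequality $\|g\|_{H^\beta} \le C q_X^{-\beta}\|g\|_{L_2}$ from Proposition~\ref{soblev_bernstein}/Theorem~\ref{general_L2 Bernstein}, combined with a stability estimate for $I_X$, then feed in the $L_2$-error bound.

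The main obstacle I expect is keeping the $\rho$-dependence honest rather than hiding $\rho$-powers inside ``$C$''. The native-space (= $\beta=0$) error estimate most cleanly produces a $q_X$-power only after a zeros-lemma argument that costs a factor $\rho^{2s+d}$; one must make sure that, when this is combined with the Bernstein inequality (which is naturally stated in $q_X$, i.e. carries its own implicit $\rho$-cost when converted to $h_X$), the powers of $\rho$ add up to exactly $2s+d-\beta$ and no more. The interpolation route sidesteps this: it only uses the two endpoints, and at $\beta = 2s+d$ there is literally no $\rho$ at all (constant $1$), so the geometric mean automatically yields $\rho^{2s+d-\beta}$. For that reason I would present the interpolation argument as the primary proof, remarking that the endpoint $\beta=2s+d$ is the boundedness of the native-space projection and the endpoint $\beta=0$ is the standard thin-plate-spline approximation estimate, with real interpolation of the scale $\{H^\beta(\Sph^d)\}$ filling in between.
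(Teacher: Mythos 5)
Your interpolation argument is correct and delivers the stated estimate, including the right $\rho$-dependence. The paper's own proof is a one-line citation of \cite[Theorem~A.3]{NRW} (applied with $2\tau=2s+d$), a pre-packaged two-sided Sobolev interpolation error estimate for spherical kernels whose Fourier coefficients decay at a prescribed polynomial rate; what you have done is, in effect, re-derive that result from its own ingredients rather than invoke it. Your route is more self-contained and has the virtue of making the $\rho$-bookkeeping completely transparent: the top endpoint $\beta=2s+d$ is $\rho$-free, so the geometric mean of the endpoints automatically produces $\rho^{2s+d-\beta}$. Moreover, because the paper uses the potential-space norm $\|\cdot\|_{H^\beta}=\|\LL^\beta\cdot\|_{L_2}$ of \eqref{pot_space_LL}, your interpolation inequality $\|e\|_{H^\beta}\le\|e\|_{L_2}^{1-\theta}\|e\|_{H^{2s+d}}^{\theta}$ is a literal H\"older inequality on the spherical-harmonic coefficients with constant $1$, so no hidden interpolation constants enter.

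One phrase is slightly loose. At the top endpoint you assert that $I_X$ ``has norm $1$'' on the native space. Because $\phi_s$ is SCPD of order $L=s+1$ rather than positive definite, the optimality of kernel interpolation yields the Pythagorean inequality $|u-I_Xu|_{\mathcal{N}(\phi_s)}\le|u|_{\mathcal{N}(\phi_s)}$ only in the native \emph{seminorm}. Converting this to $\|u-I_Xu\|_{H^{2s+d}}\le C\|u\|_{H^{2s+d}}$ also requires controlling the low-frequency ($\ell<L$) Fourier block of $u-I_Xu$, which is routine (dominate it by $\|u-I_Xu\|_{L_2}$ and invoke the $L_2$ error estimate), but the resulting constant is a fixed $C$, not $1$. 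Since the target estimate only asks for an $X$-independent constant at that endpoint, this is a cosmetic repair and does not affect the exponents or the conclusion.
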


\begin{proof} If $X$ is quasi uniform, then for $\phi_s$ the
  coefficients $\hat \phi_s(\ell)$ satisfy
  $\hat \phi(\ell) \sim \ell^{2s+d}$, if $\ell>s$. The result then
  follows from \cite[Theorem A.3]{NRW}, with $\beta \le 2s+d$ and
  $2\tau=2s+d$.
\end{proof}

For the latter, we begin by letting $g=\opL(u-I_Xu)$, and considering
the TPS $\phi_{s-1}$ and the norming set $Y$ discussed in
Lemma~\ref{norming_set_lemma}. A simple application of the triangle
inequality implies that
\[
\|I_Yg\|_{L_2} \le \|g\|_{L_2}+ \|I_Yg-g\|_{L_2}.
\]
Since $\|g\|_{L_2}=\|\opL(u-I_Xu)\|$, \eqref{equiv} gives
$\|g\|_{L_2}\le \|u-I_Xu\|_{H^2}$, and then applying Proposition~
\ref{est_u_interp} for $\beta =2$ results in this estimate:
\begin{equation}\label{g-est}
\|g\|_{L_2}\le C\rho_X^{2s+d-2}q_X^{2s+d-2}\|u\|_{H^{2s+d}}.
\end{equation}
Applying the same proposition to $ \|I_Yg-g\|_{L_2}$, this time for
$\phi_{s-1}$, yields
\begin{equation}\label{I_Yg-g-est}
  \|I_Yg-g\|_{L_2}\le C\rho_Y^{2s+d-2}q_Y^{2s-2+d}\|g\|_{H^{2s+d-2}}.
\end{equation}
Combining the inequalities \eqref{g-est} and \eqref{I_Yg-g-est} above
and again using \eqref{equiv} and the fact that $q_X \sim q_Y$, we
arrive at the result below.
\begin{equation}\label{I_Yg-est}
  \|I_Yg\|_{L_2} \le C\rho^{2s+d-2}q_Y^{2s+d-2}\|u\|_{H^{2s+d}}.
\end{equation}

The next step is to use \eqref{Riesz-Lagrange}, which applies since
the Lagrange basis for $S_Y(\phi_{s-1})$ is a Riesz basis. Letting
$\{\widetilde \chi_\eta\}_{\eta \in Y}$ be that basis, we have
$I_Yg=\sum_{\eta\in Y}g(\eta)\widetilde \chi_\eta$. Consequently,
applying \eqref{Riesz-Lagrange} results in:
\begin{equation}\label{Riesz-Lagrange-2}
  C_{\mathfrak{L}}\, q_Y^{d/2}\bigg(\sum_{\xi\in Y} |g(\eta)|^2\bigg)^{1/2}
  \le
  \|I_Yg\|_{L_2(\sph^d)} 
  \le
  C_{\mathfrak{R}} \,
  q_Y^{d/2}\bigg(\sum_{\eta\in Y}
  |g(\eta)|^2\bigg)^{1/2}.
\end{equation}
This and \eqref{I_Yg-est} imply that
\[
\|g|_Y\|_{\ell_2} \le C\rho^{2s+d-2}q_Y^{2s+d/2-2}\|u\|_{H^{2s+d}}.
\]

The final step is to note that
$g(\eta)=\opL(I_Xu)(\eta)-(\opL u)(\eta) =\sum_{\xi\in
  X}u(\xi)\opL(\chi_\xi)(\eta)-f(\eta)$. In terms of
$\K_{\eta,\xi}=\opL(\chi_\xi)(\eta)$ we see that
\[
  \|\K(\underbrace{I_Xu|_X}_{u|_X})|_Y -f |_Y\|_{\ell_2}\le
  C\rho^{2s+d-2}q_Y^{2s+d/2-2}\|u\|_{H^{2s+d}}.
\]

\begin{lemma}
  Let $\vec{a}\in \ell_2(X)$ satisfy
  $\|\K\vec{a}-f|_Y\|_{\ell_2(Y)}=\min_{\alpha  \in \ell_2(X)} \|\K \alpha
  -f|_Y\|_{\ell_2(Y)}$. Then,
\[
  \|\K\vec{a}-f|_Y\|_{\ell_2(Y)} \le
  C\rho^{2s+d-2}q_Y^{2s+d/2-2}\|u\|_{H^{2s+d}}.
\]
\begin{proof}
  Since we may take $\alpha=I_Xu|_X=u|_X$, the left side above cannot
  exceed the right side of the previous estimate.
\end{proof}
\end{lemma}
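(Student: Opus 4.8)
The plan is essentially to read off the result from the chain of estimates that immediately precedes the statement. The key observation is that the least-squares minimizer $\vec{a}$ by definition minimizes $\|\K\alpha - f|_Y\|_{\ell_2(Y)}$ over all $\alpha\in\ell_2(X)$, so in particular its value cannot exceed the value attained at any specific choice of $\alpha$. The natural comparison point is $\alpha = u|_X$, the restriction of the true solution to the centers $X$; since the interpolant $I_Xu$ satisfies $I_Xu|_X = u|_X$, this is the same as $\alpha = I_Xu|_X$, which is precisely the vector plugged into $\K$ in the displayed bound just before the lemma.

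First I would recall the string of inequalities built up in the text: starting from $g = \opL(u - I_Xu)$, one bounds $\|g\|_{L_2}$ via \eqref{equiv} and Proposition~\ref{est_u_interp} (with $\beta=2$) to get \eqref{g-est}; bounds $\|I_Yg - g\|_{L_2}$ via Proposition~\ref{est_u_interp} applied to $\phi_{s-1}$ to get \eqref{I_Yg-g-est}; combines these using $q_X\sim q_Y$ to get \eqref{I_Yg-est}; then uses the Riesz-basis property \eqref{Riesz-Lagrange} for the Lagrange basis of $S_Y(\phi_{s-1})$ to pass from $\|I_Yg\|_{L_2}$ to $\|g|_Y\|_{\ell_2(Y)}$; and finally identifies $g(\eta) = \opL(I_Xu)(\eta) - f(\eta) = (\K\, u|_X)_\eta - (f|_Y)_\eta$ by definition of $\K_{\eta,\xi} = \opL(\chi_\xi)(\eta)$ and the fact that $\opL u = f$. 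This yields exactly
\[
  \|\K (u|_X) - f|_Y\|_{\ell_2(Y)} \le C\rho^{2s+d-2} q_Y^{2s+d/2-2}\|u\|_{H^{2s+d}}.
\]

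Then the proof is one line: since $\vec{a}$ is a minimizer of $\alpha\mapsto \|\K\alpha - f|_Y\|_{\ell_2(Y)}$ and $u|_X\in\ell_2(X)$ is an admissible competitor, we have $\|\K\vec{a} - f|_Y\|_{\ell_2(Y)} \le \|\K(u|_X) - f|_Y\|_{\ell_2(Y)}$, and the right-hand side is bounded as above. I would present exactly this: cite the preceding display, invoke the minimality of $\vec{a}$ against the competitor $\alpha = u|_X = I_Xu|_X$, and conclude. (This is in fact what the one-line proof already given in the excerpt does.)

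There is no real obstacle here — the lemma is a trivial consequence of the variational characterization of the least-squares solution once the preceding estimate is in hand; all the genuine work lies in establishing that preceding estimate, which relies on the interpolation error bound (Proposition~\ref{est_u_interp}), the norm equivalences \eqref{equiv} between $\opL$ and two derivatives, the comparability $q_X\sim q_Y$, and the Riesz-basis bounds \eqref{Riesz-Lagrange}. The only point to be careful about is that the competitor must genuinely lie in $\ell_2(X)$ and that $\K$ applied to it reproduces the sampled residual $g|_Y$ — both of which are immediate from the Lagrange property $\chi_\xi(\eta)=\delta_{\xi,\eta}$ and $\opL u = f$.
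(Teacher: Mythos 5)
Your proposal is correct and is exactly the paper's argument: both invoke the variational characterization of the least-squares solution with the competitor $\alpha = u|_X = I_Xu|_X$ and then cite the estimate established in the display immediately preceding the lemma. The additional recapitulation of how that preceding estimate was built is accurate but not part of the proof of this lemma per se.
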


Note that
\[
  \|\K\vec{a}-f|_Y+
  f|_Y-\K(u|_X)\|_{\ell_2(Y)}\le \|\K\vec{a}-f|_Y\|_{\ell_2(Y)}+
  \|f|_Y-\K(u|_X)\|_{\ell_2(Y)}.
\]
Consequently,
\[
  \|\K\vec{a}-\K(u|_X)\|_{\ell_2(Y)} \le
  C\rho^{2s+d-2}q_Y^{2s+d/2-2}\|u\|_{H^{2s+d}}.
\]
In the last inequality, using \eqref{norm_Ka_lower_bnd}, with $a$
replaced by $\K\vec{a}-\K(I_Xu|_X)$, and noting that we are using a
Riesz basis, we arrive at
\[
  \|\K(\vec{a}-u|_X)\|_{\ell(Y)}\ge
  C\sqrt{\frac{M}{N}}\|\vec{a}-u|_X\|_{\ell_2(X)}.
\]
Since the sets $X$ and $Y$ are comparable in the sense that
$q_X\sim q_Y$ and $h_x\sim h_Y$, $M\sim N$. 
\begin{equation}\label{est_norm_a-u_X}
  \|\vec{a}-u|_X\|_{\ell_2(X)} \le
 C \|\K\vec{a}-\K(u|_X)\|_{\ell_2(Y)} \le
  C\rho^{2s+d-2}q_Y^{2s+d/2-2}\|u\|_{H^{2s+d}}.
\end{equation}

Returning to estimating $\|I_X u - u^*\|_{L_2}$, we have that
$I_X u - u^* = \sum_{j=1}^N (u(x_j) - a_j) \chi_j$, which we can bound
using the fact that $\{\chi_j\}$ is a Riesz basis. So, by
\eqref{Riesz},
$ \| I_Xu - u^*\|_{L_2}\le C q_X^{d/2}\|\vec{a} -
u|_X\|_{\ell_2(X)}$. This and the previous inequality, with $q_Y\sim q_X$,
imply that
\begin{equation}\label{u_X-u*-est}
 \| I_Xu - u^*\|_{L_2}\le
  C\rho^{2s+d-2}q_X^{2s+d-2}\|u\|_{H^{2s+d}}.
\end{equation}

\begin{theorem}\label{L2_est_L_2_proj} Let $u$ solve $\opL u= f$, with
  $u,f\in H^{2s+d}$. If the SBF is the thin-plate spline $\phi_s$,
  $s \in \N$, defined in \eqref{TPS}, then
\[
  \|u-u^*\|_{L_2} \le C\rho^{2s+d}q_X^{2s+d-2} \|u\|_{H^{2s+d}}.
\]
\end{theorem}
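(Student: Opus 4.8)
The plan is to combine the two bounds already established in \eqref{u_u*_split}. The first term, $\|u - I_X u\|_{L_2}$, is handled directly by Proposition~\ref{est_u_interp} with $\beta = 0$: since $u \in H^{2s+d}$ and $\phi_s$ is the SBF in use, we get $\|u - I_X u\|_{L_2} \le C\rho^{2s+d} q_X^{2s+d} \|u\|_{H^{2s+d}}$. The second term, $\|I_X u - u^*\|_{L_2}$, is exactly the quantity estimated in \eqref{u_X-u*-est}, namely $\|I_X u - u^*\|_{L_2} \le C\rho^{2s+d-2} q_X^{2s+d-2}\|u\|_{H^{2s+d}}$.

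So the proof amounts to substituting these two estimates into \eqref{u_u*_split} and observing which term dominates. Since $q_X \le 1$ (or at any rate bounded, and the regime of interest has $q_X$ small), we have $q_X^{2s+d} \le q_X^{2s+d-2}$, so the interpolation term is of higher order and is absorbed into the second term; similarly $\rho^{2s+d} \ge \rho^{2s+d-2}$ since $\rho \ge 1$, so the exponent on $\rho$ in the final bound is the larger one, $2s+d$. This yields
\[
  \|u - u^*\|_{L_2} \le C\rho^{2s+d} q_X^{2s+d-2}\|u\|_{H^{2s+d}},
\]
which is exactly the claimed estimate.

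There is really no serious obstacle here: the theorem is a bookkeeping consolidation of the chain of lemmas developed over the preceding two pages. The one point that warrants a sentence of care is making sure the hypotheses line up — in particular that the norming set $Y$ used in \eqref{u_X-u*-est} can be chosen with $q_Y \sim q_X$ and $M \sim N$, which is precisely the comparability assumption imposed at the start of Section~\ref{soln_via_ls} and is guaranteed by Theorem~\ref{Theorem:Norming_Set} together with Theorem~\ref{norming_set_TPS}. The only mildly delicate ingredient buried in \eqref{u_X-u*-est} is the use of the lower bound \eqref{norm_Ka_lower_bnd} for $\K$ applied to the vector $\K\vec a - \K(u|_X)$; but this was already carried out in the discussion leading up to \eqref{est_norm_a-u_X}, so nothing new is needed. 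I would therefore keep the proof to two or three lines: cite Proposition~\ref{est_u_interp} for the first term of \eqref{u_u*_split}, cite \eqref{u_X-u*-est} for the second, and conclude by comparing powers of $q_X$ and $\rho$.
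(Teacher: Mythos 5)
Your proof is correct and follows exactly the same route as the paper: split via \eqref{u_u*_split}, bound the interpolation term with Proposition~\ref{est_u_interp} at $\beta=0$, bound the second term with \eqref{u_X-u*-est}, and combine. You are actually a bit more careful than the paper's one-line justification, since you make explicit that $q_X\le 1$ and $\rho\ge 1$ are what let the two powers consolidate into $\rho^{2s+d}q_X^{2s+d-2}$.
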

\begin{proof}
  As we noted at the start of this section,
  $\|u-u^\ast\|_{L_2}\le \|u-I_Xu\|_{L_2}+\|I_Xu-u^\ast\|_{L_2}$. By
  Proposition~\ref{est_u_interp}, the interpolation error estimate is
  comparable to the $\|I_Xu-u^\ast\|_{L_2}$. The result then follows.
\end{proof}
% %%%%%%%%%%%%%%%%%%%%%%%%
% %%
% %%
% %%
% %%
% %%%%%%%%%%%%%%%%%%%%%%%%

\section{Square system, with thinning}\label{thinning algorithm}

We now consider the asymmetric matrix
$\K :=\Bigl(\opL \chi_{k} (y_j)\Bigr)_{j,k}\in \R^{M\times N}$. Our
goal is to ``thin'' the point set
$Y\to \widetilde{Y}= \{\tilde{y}_j\mid 1\le j\le N\}$, where
$\#\tilde{Y}=N$, so that
$\Bigl(\opL \chi_{k} (\tilde{y}_j)\Bigr)_{k,j}\in \R^{N\times N}$ is a
relatively stable $N\times N$ matrix. To do this we will use the QR
rank reducing (RRQR) factorization from \cite{Gu-Eisenstat}; we
discuss this below.

To begin, note that the $N^{th}$ singular value of $\K$ is
$\sigma_N(\K) = \inf_{\|\alpha\|_{\ell_2}=1}\|\K\alpha\|_{\ell_2(Y)}$.
It follows from this observation and \eqref{norm_Ka_lower_bnd} that
\begin{equation}\label{s-val_lower_bnd_K}
  \sigma_N(\K) \ge C\sqrt{\kappa}, \ \kappa=M/N.
\end{equation}

%\cite{Schaback-et-al}

Choose $Y$ so that $M$ is a multiple of $N$ (if necessary, enlarge
$Y$),so that $\kappa=M/N$ is an integer $2$ or larger. Let
$e_\kappa=(1\ 1 \cdots 1)\in \R^{1\times \kappa}$ and define the
$M\times M$ partitioned matrix consisting of $\kappa$ copies of $\K$:
 \[
   \widetilde \K := \Bigl( \K \ \vert \ \K \ \vert \ \dots \ \vert \
   \K\Bigr) = e_\kappa\otimes \K,
\]
where $e_\kappa\otimes \K$ is the Kronecker product of $e_\kappa$ with
$\K$. We want to use this product to find the singular values of
$\widetilde \K$.

For any two matrices $A$ and $B$, the singular values of $A\otimes B$
are the entries in $\Sigma(A)\otimes \Sigma(B)$
\cite[pg.~294]{Van_Loan}; that is, if $\sigma_i(A)$ is a singular
value of $A$, $i\le \rank(A)$ and $\sigma_j(B)$ are those for $B$,
$j\le \rank(B)$, then those for $A\otimes B$ are
$\sigma_i(A)\sigma_j(B)$. Because the only singular value of
$(1\ 1 \cdots 1)$ is $\sigma_1(e_\kappa)=\sqrt{\kappa}$ and those for $\K$ are
$\sigma_j(\K)$, with $1\le j \le N=\rank(\K)$, it follows that the
singular values of $\widetilde \K$ satisfy
\begin{align}\label{s-vals_big_K}
  \sigma_j(\widetilde \K) &=\sqrt{ \kappa}  \sigma_j(\K),\ 1\le j\le N.
                            \nonumber\\
  \sigma_N(\widetilde \K) &=\sqrt{ \kappa}\sigma_N(\K)\ge C\kappa^{3/2},
\end{align}
where the last inequality follows from \eqref{s-val_lower_bnd_K}.

\paragraph{Rank revealing factorization} We will give a brief
discussion of the rank revealing QR factorization (RRQR) discussed in
\cite{Gu-Eisenstat} for an $m\times n$ matrix $F$, with $m\ge n$. The
matrix $F$ can be factored as follows:
\begin{equation}\label{rrqr}
  F\Pi = Q\begin{pmatrix} A_k &B_k\\0& C_k\end{pmatrix}.
\end{equation}
The matrix $\Pi$ is an $n\times n$ permutation matrix, $Q$ is an
$m\times m$ orthogonal matrix, $A_k$ is a $k\times k$ upper triangular
matrix with non-negative diagonal elements. The remaining matrices
$B_k$ and $C_k$ are, respectively, $k\times (n-k)$ and
$(m-k)\times (n-k)$. The factorization is called \emph{rank revealing}
if $\sigma_{min}(A_k) \ge \sigma_k(F)/q_1(k,n)$ and
$\sigma_{max}(C_k)\le q_1(k,n)\sigma_{k+1}(F)$, where $q_1(k,n)$ is
bounded above by a low degree polynomial. If in addition,
$|(A_k^{-1}B_k)|\le q_2(k,n)$, where $q_2(k,n)$ is also bounded above
by a low degree polynomial, then it is called a \emph{strong} RRQR. In
\cite[Sect.~3]{Gu-Eisenstat}. Gu and Eisenstat show that there is a
permutation $\Pi$ such that the factorization \eqref{rrqr} is a strong
RRQR, with $q_1=\sqrt{1+k(n-k)}$ and $q_2=1$.

Recall that $(A\otimes B)^T=A^T\otimes B^T$, so
$\widetilde\K^T= e_\kappa^T\otimes\K^T \in \RR^{M\times M}$, where
$M=\kappa N$, $\kappa\ge 2$. In \eqref{rrqr}, choose $k=N<M$; the
factorization then becomes
\begin{equation}\label{factor_with_Pi}
\widetilde{\K}^T \Pi = Q\begin{pmatrix} A_N &B_N\\0& C_N\end{pmatrix}.
\end{equation}

We will need the singular values of $\widetilde{\K}^T$. Because a
matrix and its transpose have the same singular values, by
\eqref{s-vals_big_K} we see that
$\sigma_j(\widetilde \K^T)=\sigma_j(\widetilde
\K)=\sqrt{\kappa}\sigma_j(\K) $. Since a strong RRQR exists for
$\widetilde{\K}^T$, we have that
\[
  \sigma_{min}(A_N)=\sigma_N(A_N)\ge
  \sigma_N(\widetilde{\K}^T)/q_1(N,M)=
  \sqrt{\kappa}\sigma_N(\K)/q_1(N,M),
\]
where $\kappa=M/N$,
$q_1(N,M)=\sqrt{1+N(M-N)}=\sqrt{1+N^2(\kappa-1)}\sim N$. This and
\eqref{s-vals_big_K} imply that
\begin{equation}\label{lower_bnd_sigma_A_N}
  \sigma_N(A_N)\ge  C N^{-1} \kappa^2.
\end{equation}

Returning to the factorization above, $\Pi$ permutes the columns of
$\widetilde \K^T$. If we view these columns as labeled by $y_j$'s, the
permutation effectively changes these to the $\widehat
y_j$'s. Assuming this has been done, we may drop $\Pi$ in
\eqref{factor_with_Pi}. Thus the $j^{th}$ column in
$\widetilde{K}^T\Pi$ is now $\opL\chi_k(\widehat{y}_j)$, where the row
index $k=1\ldots N$ is repeated $\kappa$ times. In addition, by
dropping the columns from $N+1$ to $M$ in the resulting equation, we
form a reduced $M\times N$ version of \eqref{factor_with_Pi},
\[
\widetilde \K^T_{red}=  Q\begin{pmatrix} A_N \\ 0\end{pmatrix},
\]
where the rows of the reduced matrix are $\kappa$ copies of the matrix
$\K_{red}^T$, which is the matrix $\K^T$ with the appropriate columns
removed. Thus, $\widetilde \K^T_{red}=e_\kappa^T\otimes \K_{red}^T$;
hence, $\sigma_N(\widetilde
\K^T_{red})=\sqrt{\kappa}\sigma_N(\K_{red}^T)$. In addition, since the
singular values of a matrix are invariant under left and/or right
multiplication by an orthogonal matrix, we see that
\[
  \sigma_N(\widetilde \K^T_{red})=\sqrt{\kappa} \sigma_N(\K^T_{red})=
  \sigma_N\bigg( Q\begin{pmatrix} A_N \\
    0\end{pmatrix}\bigg)= \sigma_N\begin{pmatrix} A_N \\
    0\end{pmatrix}=\sigma_N(A_N),
\]
so $\sigma_N(\K_{red}^T) = \sigma_N(A_N)/\sqrt{\kappa}$. Moreover, the
previous equation, $\sigma_N(\K_{red}^T) =\sigma_N(\K_{red})$ and
\eqref{lower_bnd_sigma_A_N} imply
\begin{equation}\label{est_least_sigma}
  \sigma_N(\K_{red})\ge C N^{-1} \kappa^{3/2}.
\end{equation}
This, coupled with the singular value decomposition for $\K_{red}$,
gives us this result.

\begin{proposition}\label{thinned_kansa_matrix} Let $Y$ be a set of
points satisfying the properties listed for a norming set in
Section~\ref{norming_set_Y}, possibly extended to have $\#Y=\kappa \#
X$, where $\kappa$ is an integer larger than or equal to 2. Then there
exists a $\widetilde Y \subset Y$, with $\# \widetilde Y=\#X =N $ such
that the $N\times N$ matrix $\K_{red}$, with $\widehat y_j$'s
replacing the first $N$ $y_j$'s in the Kansa matrix $\K$, satisfies $
\|\K_{red}a\|_{\ell_2(\widetilde Y)} \ge CN^{-1}\|a\|_{\ell_2(X)}, $
and is invertible, with $\| \K_{red}^{-1}\|_{\ell_2(X)} \le CN$.
\end{proposition}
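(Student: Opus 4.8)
The plan is to read off the conclusion directly from the singular value bound \eqref{est_least_sigma}, which is the culmination of the RRQR argument preceding the proposition. First I would recall that $\K_{red}\in\R^{N\times N}$ is the square matrix obtained from $\widetilde\K^T_{red}=e_\kappa^T\otimes\K^T_{red}$ by undoing the Kronecker replication; the chain of equalities in the paragraph before \eqref{est_least_sigma} shows $\sigma_N(\K_{red})=\sigma_N(\K^T_{red})=\sigma_N(A_N)/\sqrt\kappa$, and combining with the RRQR lower bound \eqref{lower_bnd_sigma_A_N} for $\sigma_N(A_N)$ yields \eqref{est_least_sigma}, namely $\sigma_N(\K_{red})\ge CN^{-1}\kappa^{3/2}$. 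Since $\kappa\ge 2$ is an integer, $\kappa^{3/2}$ is bounded below by an absolute constant, so $\sigma_N(\K_{red})\ge CN^{-1}$ after absorbing constants.

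Next I would invoke the singular value decomposition $\K_{red}=U\Sigma V^*$ with $\Sigma=\mathrm{diag}(\sigma_1,\dots,\sigma_N)$ and $\sigma_N=\sigma_{\min}(\K_{red})$. For any $a\in\ell_2(X)$ one has $\|\K_{red}a\|_{\ell_2(\widetilde Y)}=\|\Sigma V^*a\|_{\ell_2}\ge\sigma_N\|V^*a\|_{\ell_2}=\sigma_N\|a\|_{\ell_2(X)}$, which gives the stated lower bound $\|\K_{red}a\|_{\ell_2(\widetilde Y)}\ge CN^{-1}\|a\|_{\ell_2(X)}$. In particular $\K_{red}$ has trivial kernel, hence (being square) is invertible, and the operator norm of the inverse is $\|\K_{red}^{-1}\|=1/\sigma_N(\K_{red})\le CN$. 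Finally I would note that $\widetilde Y$ is precisely the set of $N$ points $\{\widehat y_1,\dots,\widehat y_N\}$ selected by the column permutation $\Pi$ in \eqref{factor_with_Pi} applied to $\widetilde\K^T$—equivalently, the rows of $\K$ kept when forming $\K_{red}$—so $\widetilde Y\subset Y$ with $\#\widetilde Y=N=\#X$ as claimed.

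The only subtlety worth spelling out, and the place where I expect a reader might balk, is the bookkeeping that identifies $\sigma_N(\K_{red})$ with $\sigma_N(A_N)/\sqrt\kappa$: one must be careful that dropping columns $N+1,\dots,M$ from the permuted $\widetilde\K^T\Pi$ yields a matrix whose rows are genuinely $\kappa$ identical copies of $\K^T_{red}$ (this uses that $\Pi$ permutes only within the $M=\kappa N$ columns and that $\widetilde\K^T=e_\kappa^T\otimes\K^T$ has its Kronecker structure in the \emph{rows}), together with the invariance of singular values under orthogonal multiplication to strip off $Q$. All of this is carried out in the text preceding the proposition, so in the proof proper it suffices to cite \eqref{est_least_sigma} and run the two-line SVD argument above; the main obstacle was already dispatched in establishing \eqref{est_least_sigma}, and what remains here is genuinely routine.
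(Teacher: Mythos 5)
Your proposal is correct and follows the paper's own route exactly: cite the singular value bound \eqref{est_least_sigma} established in the preceding text (via the Kronecker duplication, strong RRQR, and the chain of equalities relating $\sigma_N(\K_{red})$ to $\sigma_N(A_N)/\sqrt{\kappa}$), absorb the factor $\kappa^{3/2}\ge 2^{3/2}$ into the constant, then use the SVD of the square matrix $\K_{red}$ to get the lower bound, invertibility, and $\|\K_{red}^{-1}\|=1/\sigma_N(\K_{red})\le CN$, with $\widetilde Y$ identified as the $N$ points selected by the RRQR column permutation $\Pi$. This is precisely what the paper intends when it says \eqref{est_least_sigma} ``coupled with the singular value decomposition for $\K_{red}$, gives us this result.''
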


\paragraph{Error estimates} Suppose the conditions in
Theorem~\ref{L2_est_L_2_proj} hold, and that $\widetilde Y$ and
$\K_{red}$ are as in Proposition~\ref{thinned_kansa_matrix}. The lower
bound
$\|\K_{red}a\|_{\ell_2(\widetilde Y)} \ge CN^{-1}\|a\|_{\ell_2(X)}$
plays the role of \eqref{norm_Ka_lower_bnd} for the case at
hand. Replacing \eqref{norm_Ka_lower_bnd} by it, carrying out the
calculations in Section~\ref{soln_via_ls} and using the same argument
from that section here yields
$\|u-u^*\|_{L_2} \le C\rho^{2s+d}q_X^{2s-2} \|u\|_{H^{2s+d}}$.  Since
the interpolation error discussed earlier has order $q_X^{2s+d}$, it
won't contribute to the error for $\|I_X-u^*\|_{L_2}$ derived
above. Consequently, our final estimate is given below:

\begin{theorem}\label{thinning}
\[
  \|u-u^*\|_{L_2} \le C\rho^{2s+d}q_X^{2s-2} \|u\|_{H^{2s+d}}.
\]
\end{theorem}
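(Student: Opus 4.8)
The plan is to reuse, essentially verbatim, the chain of estimates carried out in Section~\ref{soln_via_ls}, making the single substitution of the thinned lower bound from Proposition~\ref{thinned_kansa_matrix} in place of the oversampled lower bound \eqref{norm_Ka_lower_bnd}. As in Theorem~\ref{L2_est_L_2_proj}, I would split $\|u-u^*\|_{L_2}\le\|u-I_Xu\|_{L_2}+\|I_Xu-u^*\|_{L_2}$ via \eqref{u_u*_split}. The first term is handled by Proposition~\ref{est_u_interp} with $\beta=0$, giving the interpolation rate $C\rho^{2s+d}q_X^{2s+d}\|u\|_{H^{2s+d}}$, which is of higher order in $q_X$ than the claimed bound and so does not contribute to the dominant term. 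The work is all in the second term.

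For $\|I_Xu-u^*\|_{L_2}$, I would follow Section~\ref{soln_via_ls} step by step: set $g=\opL(u-I_Xu)$, use the triangle inequality $\|I_Yg\|_{L_2}\le\|g\|_{L_2}+\|I_Yg-g\|_{L_2}$ on the TPS $\phi_{s-1}$, bound $\|g\|_{L_2}$ by \eqref{equiv} and Proposition~\ref{est_u_interp} with $\beta=2$ to get \eqref{g-est}, bound $\|I_Yg-g\|_{L_2}$ by \eqref{I_Yg-g-est}, and combine to reach the analogue of \eqref{I_Yg-est}. Then the Riesz (Lagrange) basis property \eqref{Riesz-Lagrange} for $S_{\widetilde Y}(\phi_{s-1})$ converts this into a bound on $\|g|_{\widetilde Y}\|_{\ell_2}$, and since $g(\eta)=(\opL I_Xu)(\eta)-f(\eta)$ this is exactly $\|\K_{red}(u|_X)-f|_{\widetilde Y}\|_{\ell_2(\widetilde Y)}$. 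Because $u^*$ is the least-squares (here, square-system) solution, the analogue of the intermediate Lemma gives $\|\K_{red}\vec a-f|_{\widetilde Y}\|_{\ell_2}$ no larger than this, and the triangle inequality yields a bound on $\|\K_{red}(\vec a-u|_X)\|_{\ell_2(\widetilde Y)}$ of order $q_X^{2s+d/2-2}\|u\|_{H^{2s+d}}$ (using $q_{\widetilde Y}\sim q_X$).

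At this point the key substitution occurs: instead of dividing by $\sqrt{M/N}\sim 1$ as in \eqref{est_norm_a-u_X}, I apply the lower bound $\|\K_{red}a\|_{\ell_2(\widetilde Y)}\ge CN^{-1}\|a\|_{\ell_2(X)}$ from Proposition~\ref{thinned_kansa_matrix} with $a=\vec a-u|_X$. This costs a factor $N\sim q_X^{-d}$, so $\|\vec a-u|_X\|_{\ell_2(X)}\le C\rho^{2s+d-2}q_X^{-d}q_X^{2s+d/2-2}\|u\|_{H^{2s+d}}=C\rho^{2s+d-2}q_X^{2s-d/2-2}\|u\|_{H^{2s+d}}$. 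Finally, $I_Xu-u^*=\sum(u(x_j)-a_j)\chi_j$ and the Riesz bound \eqref{Riesz} contributes a further factor $q_X^{d/2}$, giving $\|I_Xu-u^*\|_{L_2}\le C\rho^{2s+d}q_X^{2s-2}\|u\|_{H^{2s+d}}$. Combining with the higher-order interpolation term gives the stated estimate.

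The main obstacle, such as it is, is purely bookkeeping of powers of $q_X$ and $\rho$: one must keep careful track that the thinning step degrades the lower bound by exactly $N\sim q_X^{-d}$ (as opposed to the benign $\sqrt{\kappa}$ of the oversampled case), and verify that the loss of $q_X^{-d}$ is partially but not fully compensated by the $q_X^{d/2}$ from the Riesz basis and the $q_X^{d/2}$ already present in the $\ell_2$–$L_2$ conversion of $g|_{\widetilde Y}$, so that the net change relative to Theorem~\ref{L2_est_L_2_proj} is exactly a loss of $q_X^{d}$ in the rate, i.e.\ $q_X^{2s+d-2}\rightsquigarrow q_X^{2s-2}$. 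There is no new analytic content beyond what is established in Sections~\ref{S:Norming}, \ref{soln_via_ls} and Proposition~\ref{thinned_kansa_matrix}; it only remains to check the arithmetic is consistent.
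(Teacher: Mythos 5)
Your proposal follows the paper's own argument exactly: the paper proves Theorem~\ref{thinning} by rerunning the chain of estimates from Section~\ref{soln_via_ls} with the single substitution of the thinned lower bound $\|\K_{red}a\|_{\ell_2(\widetilde Y)}\ge CN^{-1}\|a\|_{\ell_2(X)}$ from Proposition~\ref{thinned_kansa_matrix} in place of \eqref{norm_Ka_lower_bnd}, and then observes that the interpolation term $\|u-I_Xu\|_{L_2}$ is of higher order and so does not contribute. Your power-counting --- the cost of $N\sim q_X^{-d}$ from the degraded lower bound, partially offset by the $q_X^{d/2}$ from the Riesz basis, for a net loss of $q_X^d$ relative to Theorem~\ref{L2_est_L_2_proj} --- is exactly the arithmetic the paper relies on, so this is the same proof carried out a bit more explicitly.
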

% This of course is the least squares estimate derived in
% Section~\ref{soln_via_ls}.

\begin{remark}
  \rm Although a norming set is needed for the proof of the error
  estimate in Theorem~\ref{thinning}, in practice one can use any set
  $Z$ to replace $\widetilde Y$, provided where $|Z| = |X|$ and
  $q_Z \sim q_X$. However, there may be a price to be paid. If we also
  have $\|\K_{red}^{-1}\|_{\ell_2(Z)}\le CN^\alpha$, with
  $\alpha > 1$, then from \eqref{est_norm_a-u_X},
  \[
  \|\vec{a}-I_Xu|_X\|_{\ell_2(Z)} \le \|\K_{red}^{-1}\|_{\ell_2(Z)}
  \|\K_{red}\vec{a}-\K_{red}I_Xu|_X)\|_{\ell_2(Z)}.
\]
Since $N^\alpha\sim q_X^{-\alpha d}\sim q_Z^{-\alpha d}$, this implies
$ \|\vec{a}-I_Xu|_X\|_{\ell_2(Z)} \le Cq_X^{-\alpha
  d}\|\K_{red}\vec{a}-\K_{red}I_Xu|_X)\|_{\ell_2(Z)}$.  Following the
argument leading up to Theorem~\ref{L2_est_L_2_proj} , we have
$ \| I_Xu - u^*\|_{L_2}\le Cq_X^{2s-(\alpha-1) d-2}\|u\|_{H^{2s+d}}
$.\rm
\end{remark}

As a final comment, we note that there is room to improve
Theorem~\ref{thinning}. This comes directly from the cost of the
thinning method, specifically the that $q_1(N,M)\sim N$.  Of course,
this could be addressed by a better performing rank revealing qr
method (for which $q(N,M)\ll N$), although there may also be thinning
methods which are more specifically suited to kernels. For instance,
it may be possible to modify the greedy, symmetric kernel collocation
method presented in \cite[Section 4.2]{HSW}.

\section*{Acknowledgment} The authors wish to thank Professor Rachel
Ward for suggesting the paper by Gu and Eisenstat \cite{Gu-Eisenstat}.

\bibliographystyle{plain}

\end{document}